
\documentclass{amsart}

\usepackage{amsmath, amsthm, amscd, amsfonts, amssymb, graphicx, color,float,pgf,tikz,bm}
\usepackage[english]{babel}
\usepackage{amssymb,fontenc}
\usepackage{cite}
\usepackage{latexsym,wasysym,mathrsfs}
\usepackage{hyperref}
\usetikzlibrary{arrows}

\newtheorem{theorem}{Theorem}[section]
\newtheorem{lemma}[theorem]{Lemma}
\newtheorem{proposition}[theorem]{Proposition}
\newtheorem{corollary}[theorem]{Corollary}

\theoremstyle{definition}
\newtheorem{definition}[theorem]{Definition}

\theoremstyle{remark}

\newtheorem{remark}[theorem]{Remark}
\newtheorem{hyp}[]{Property}
\newtheorem{con}[]{Condition}
\numberwithin{equation}{section}
\newcommand{\R}{{\mathbb R}}

\newcommand{\SF}{{\mathbb S}}

\newcommand{\loc}{{\rm loc}}

\DeclareMathOperator*{\es}{esssup}
\DeclareMathOperator*{\esi}{essinf}

\def\B{{\mathcal B}}

\def\cK{{\mathcal K}}
\def\cM{{\mathcal M}}

\def\cR{{\mathcal R}}

\def\cS{{\mathcal S}}
\newcommand{\cL}{{\mathcal L}}
\def\Id{{\mathcal Id}}
\def\cR{{\mathcal R}}
\def\cT{{\mathcal T}}

\def\loc{{\rm loc}}

\def\diam{{\rm diam\,}}

\numberwithin{equation}{section}

\title[Elliptic equations in weighted Banach spaces]{Higher order elliptic equations  in weighted Banach  spaces}
\author[B.T. Bilalov]{Bilal T. Bilalov}
\address{Institute of Mathematics and Mechanics\\
 NAS of Azerbaijan, Baku, Azerbaijan\\
 e-mail:  b\_bilalov@mail.ru}

\author[S.R. Sadigova]{ Sabina R. Sadigova}
\address{Institute of Mathematics and Mechanics\\
 NAS of Azerbaijan, Baku, Azerbaijan\\
 e-mail: s\_sadigova@mail.ru}

\author[L.G. Softova]{Lyoubomira G. Softova}
\address{Department of Mathematics
University of Salerno, Italy\\
 e-mail:  lsoftova@unisa.it}

\keywords{ Elliptic equation, Banach unction spaces,
solvability in the small, Schauder type estimates, Fredholmness.}

\subjclass[2020]{Primary: 35K05; Secondary: 35A01, 35J05}


\begin{document}

\begin{abstract}
We consider $m$-th order linear, uniformly elliptic equations  $\cL u=f$  with non-smooth coefficients in Banach-Sobolev spaces $W_{X_w}^m (\Omega )$ generated by weighted   Banach Function Spaces  (BFS) $X_w (\Omega)$  on a bounded domain $\Omega \subset \R^{n}.$ 
 Supposing boundedness of the Hardy-Littlewood  Maximal operator  and the Calder\'on-Zygmund singular integrals in $X_w (\Omega)$ we obtain solvability in the small in $W_{X_w}^m (\Omega )$ and establish interior  Schauder type a priori estimates. 
These results will be used in order to obtain Fredholmness of the operator  $\cL $  in    $X_w (\Omega).$
\end{abstract}

\keywords{ Elliptic equations, Banach Function Spaces,
 Schauder type estimates, Fredholmness.}

\subjclass[msc2000]{Primary: 35K05; Secondary: 35A01, 35J05}

\maketitle

\section{Introduction}

The question of solvability in the small and Schauder type estimates play an essential role in the theory of the Fredholmness of boundary value problems for elliptic equations in appropriate BFS. There is a vast number of papers and monographs dedicated to the Fredholmness of  Partial Differential Equations (PDEs)  in the frame of  classical function classes  as the H\"older and Sobolev spaces. The appearance of new function spaces and their intensive study had a very big impact on the regularity and existence theory for  PDEs.   The so-called {non standard function spaces} turn to be interesting not only from theoretical point of view but also for the applied mathematics and mathematical physics. Among the most studied spaces we can find the Morrey $L^{p,\lambda } $ spaces, the weighted Lebesgue spaces and various  generalizations,   the variable Lebesgue spaces $L^{p(\cdot)}, $  the grand Lebesgue spaces $L^{p)} ,$   the Orlicz spaces $L_{\Phi } $ and many others (see \cite{APS,8,9,34,GS,28,21,24,25,26} and the references therein).

 In the present paper we are interested on  elliptic differential operators of higher order in weighted general BFS $X_w (\Omega)$ with   Muckenhoupt type weight $w$  in a bounded domain $\Omega \subset \R^{n}$ with a smooth enough boundary.   In order to obtain the desired  a priori estimates  we need that the Hardy-Littlewood maximal and the Calder\'on-Zygmund singular operators are bounded in $X_w (\Omega).$  Then under certain conditions on the coefficients we obtain solvability in the small, that is local solvability,  in Sobolev spaces $W_{X_w}^m (\Omega )$ generated by weighted BFS $X_w (\Omega).$

 More over, we obtain interior Schauder type inequalities in $X_w (\Omega)$ for the solutions of the linear uniformly elliptic equations under consideration.  Such estimates play an exceptional role in the establishing of  the Fredholmness of the corresponding elliptic operators. At the end, some examples of BFS are given. 

 In what follows we use the standard notation:
\begin{itemize}


\item 
  $\Omega \subset \R^{n} $ is a bounded domain with Lebesgue measure  $|\Omega |,$
  $d_{\Omega } =\diam  \Omega ,$ \newline
$\B_{r} (x)=\{y\in \R^{n} :|x-y|<r\}$,  $\B_{r} \equiv \B_{r}(0),$ \newline
  $\Omega _{r} (x_{0} )=\Omega \cap \B_{r} (x_{0}),$ \ 
 $\Omega _{r} =\Omega _{r} (0);$
\item
  $[X;Y]$ is the  Banach space of bounded operators acting from $X$ to  $Y$ endowed by the operator norm 
  $\|  \cdot\| _{[X;Y]} $ , $[X]=[X;X];$
 
\item
 $\chi_{E} $ stands for the characteristic function of the set   $E\subset \R^{n} $; 
\item
 $\alpha =(\alpha_{1},\ldots,\alpha _{n} ),$  is a  multi-index, $\alpha_i\geq 0$ and  $|\alpha |=\sum_{i=1}^{n}\alpha_{i};  $
 $$
D_{x_i}u=D_i u=\partial u/ \partial x_{i}, \quad    D^{\alpha } u=D^{\alpha_1}_{x_1}D^{\alpha_2}_{x_2}\ldots D^{\alpha_n }_{x_n} u; 
$$
\item
  for any $u:\Omega \to \R$ denote  $D^{k} u= \sum_{|\alpha|=k} D^\alpha u;$  
\item
For any normed space $X$ we accept the notion 
$$
\| D^{k} u\|_{X(\Omega )} =\sum_{|\alpha|=k}\| D^{\alpha } u\|_{X(\Omega )};  
$$

\item $F(\Omega)$ is the set of Lebesgue measurable functions in $\Omega;$
\item $\overline{\overline{o}}(1)$ stands for a quantity that tends to 0  as $r\to0;$


\end{itemize}

\section{Definitions and  auxiliary results}

The question of existence and regularity of the solutions of linear elliptic equations is strongly related to the question of continuity of certain integral operators in the corresponding function spaces. For this goal we  recall the definitions of these operators and introduce the  function spaces that we are going to use.

Let $f\in L^1(\R^n)$ and $\cM f$ be  the  Hardy-Littlewood maximal operator 
\begin{equation}\label{eq-M}
\cM  f(x)=\sup_{r>0} \frac{1}{|\B_{r} (x)| }\int_{\B_{r}(x)}|f(y)|\,dy,
\end{equation}
and $\cK f$ be the Calder\'on-Zygmund integral operator
\begin{equation}\label{eq-S}
\cK  f(x)=p.v. \int _{\R^{n} }\frac{\omega(y)}{|x-y|^{n } } f(y) \,  dy.
\end{equation}
For each $\gamma \in(0,n)$  we consider  the Riesz potential  
\begin{equation}\label{eq-R}
\cR_{\gamma } f(x)=\int _{\R^{n} }\frac{f(y)}{|x-y|^{n-\gamma } } \,  dy.
\end{equation}
Simple calculations show that  we can  estimate the Riesz potential via the maximal function  (see \cite{40})
\begin{equation}\label{Lemma 2.1}
\int _{|y-x|\le \delta }\frac{|f(y)|\, dy}{|y-x|^{n-\gamma } }  \le c \delta ^{\gamma } \cM  f(x), \qquad \forall \, x\in \R^n.
\end{equation}

Following  \cite{2}, we  define the   BFS   and give some of its properties.
\begin{definition}\label{def-BFS} 
Let $F(\R^n)$ be the set of Lebesgue measurable functions on $\R^n.$  A mapping $\|\cdot\|_X:F(\R^n)\to [0,\infty]$ is called  {\it  Banach function norm} if for all $f,g,f_n\in F(\R^n),$ $n=1,2,\ldots,$ for any constant  $a\in \R$ and for any Lebesgue measurable set $E\subset\R^n$ the following properties hold:
\begin{enumerate}
\item[(P1)]  $\|f\|_X=0 \  \Longleftrightarrow \  f=0 $ \quad a.e. in \   $\R^n,$\newline
$\|af\|_X=|a|\|f\|_X,$   \quad
   $\|f+g\|_X\leq \|f\|_X+\|g\|_X;$

\item[(P2)] if $0\leq g\leq f $ \  then \  $\|g\|_X\leq \|f\|_X;$
\item[(P3)] if $0\leq f_n \uparrow f$ \ a.e. in \  $\R^n,$ \  then \  $\|f_n\|_X\uparrow \|f\|_X;$
\item[(P4)] if $|E|<\infty$ \ then \  $\|\chi_E\|_X<\infty;$
\item[(P5)]  if $|E|<\infty $ \  then \  $\int_E |f(x)|\, dx\leq  c  \|f\|_X$, \  with a  constant independent  of  $f.$
\end{enumerate} 
The  collection of all functions $f\in F(\R^n)$ for which $\|f\|_X<\infty$ is called Banach Function Space $X(\R^n).$
\end{definition}

We can extend the Definition~\ref{def-BFS} to  BFSs  {$X(\Omega)$} builded   on  a bounded domain $\Omega$ with $|\Omega|>0$ and { regular boundary $\partial \Omega$} by  taking $f\in F(\Omega)$ and extending it as zero out of $\Omega.$

It  follows immediately, by $(P5)$  that   $X(\Omega)\subset L^1(\Omega).$  
The associated space $X'(\Omega),$ endowed  with  the associated norm $\|\cdot\|_{X'},$  consists of all  $g\in F(\Omega),$ such that  (cf. \cite{2})
$$
\|g\|_{X'(\Omega)}=\sup\Big\{ \int_{\Omega} |f(x)g(x)|\,dx : \  \|f\|_{X(\Omega)}\leq 1  \Big\}<\infty.
$$  
At this point we can  extend  the H\"older inequality in the  case of BFSs.
\begin{proposition} Let $X(\Omega)$ and $X'(\Omega)$ be associated BFSs. If $f\in X(\Omega)$ and  $g\in X'(\Omega),$ then $fg\in L^1(\Omega)$  and 
\begin{equation}\label{eq-Holder} 
\int_\Omega |f(x)g(x)|\,dx\leq \|f\|_{X(\Omega)}\|g\|_{X'(\Omega)}\,.
\end{equation}
\end{proposition}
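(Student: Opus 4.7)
The plan is to exploit the definition of the associated norm $\|\cdot\|_{X'(\Omega)}$ as a supremum over the unit ball of $X(\Omega)$, together with the positive homogeneity built into property (P1). The argument is essentially a one-line normalization trick, very much in the spirit of the proof of the classical H\"older inequality in $L^p$--$L^{p'}$ duality.

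First I would dispose of the trivial case $\|f\|_{X(\Omega)}=0$. By property (P1) this forces $f=0$ almost everywhere in $\Omega$, so $|fg|=0$ a.e.\ and both sides of \eqref{eq-Holder} vanish. In the non-trivial case $0<\|f\|_{X(\Omega)}<\infty$, I would set
$$
\tilde f(x)=\frac{f(x)}{\|f\|_{X(\Omega)}},
$$
and use the homogeneity clause of (P1), namely $\|af\|_X=|a|\|f\|_X$, to conclude that $\|\tilde f\|_{X(\Omega)}=1$. In particular $\tilde f$ lies in the set over which the supremum defining $\|g\|_{X'(\Omega)}$ is taken, so directly from the definition of the associated norm,
$$
\int_\Omega |\tilde f(x)\,g(x)|\,dx \;\le\; \|g\|_{X'(\Omega)}.
$$
Multiplying through by $\|f\|_{X(\Omega)}$ then gives exactly \eqref{eq-Holder}.

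The integrability claim $fg\in L^1(\Omega)$ is then immediate: the function $|fg|$ is non-negative and Lebesgue measurable on $\Omega$, and the inequality just established shows its integral is bounded by the finite quantity $\|f\|_{X(\Omega)}\|g\|_{X'(\Omega)}$. There is no genuine obstacle in this argument; the only point that requires a moment of care is handling the degenerate case $\|f\|_{X(\Omega)}=0$ separately, since otherwise one would have to divide by zero when normalizing. Note that neither property (P2)--(P5) nor the axiom (P5) relating $X$ to $L^1_{\mathrm{loc}}$ is needed here: the statement is a purely formal consequence of how $\|\cdot\|_{X'(\Omega)}$ is defined from $\|\cdot\|_{X(\Omega)}$.
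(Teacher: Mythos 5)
Your proof is correct: the normalization $\tilde f=f/\|f\|_{X(\Omega)}$ combined with the definition of the associated norm is exactly the standard argument, and your separate treatment of the degenerate case $\|f\|_{X(\Omega)}=0$ via (P1) closes the only gap. The paper itself offers no proof of this proposition (it simply invokes the classical BFS duality from Bennett--Sharpley), and your argument is the one implicitly intended, so there is nothing to add.
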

Let $w$ be a {\it positive weight, that is a Lebesgue measurable  function on $\Omega$ for which $0<w(x)<\infty.$}   Then we can define $X_w(\Omega)$  as  the space of all  $f\in F(\Omega)$ for which  $fw\in X(\Omega),$ that is 
$$
\|f\|_{X_w(\Omega)}= \|fw\|_{X(\Omega)}<\infty.
$$ 
In what follows we assume that $w\in X(\Omega)$  and $w^{-1}\in X'(\Omega).$

The definition of general Sobolev  BFS follows naturally from the definition of the classical Sobolev spaces.  By 
 $W_{X_w }^m (\Omega)$ we  denote the  Sobolev space of all  functions $ f\in X_w(\Omega)$ differentiable in  distributional sense up to order $m$,  that is
$$
W_{X_w}^m (\Omega)=\{ D^\alpha f\in X_w (\Omega),  \quad    \forall \, \alpha  : 0\leq |\alpha|\leq m \},
$$
endowed by the norm 
\begin{equation}\label{norm1}
\| f\|_{W_{X_w}^m (\Omega )} =\sum_{|\alpha|\le m}\| D^\alpha f\| _{X_w (\Omega )}.
\end{equation}

{We denote by  
$\overset{\circ}{W}{}^{m}_{X_w}(\Omega)$  the space of $W^{m}_{X_w}(\Omega)$-functions compactly supported in $\Omega,$  endowed   with  the same norm \eqref{norm1}.}

In the  case when $\Omega \equiv \B_{r}(0) $  we  simplify the notion, writing  $X_w(r)$ and  $W_{X_w}^m (r)$ instead of  $X_w(\B_r)$ and  $W_{X_w}^m (\B_r).$   

 In our further considerations, we are going to use the  following norm
\begin{equation}\label{norm2}
\| f\|_{W_{X_w ;d_\Omega }^{m} (\Omega)} =\sum_{|\alpha|\le m}d_{\Omega }^{|\alpha|}\| D^{\alpha} f\| _{X_w (\Omega)},  
\end{equation} 
which is   equivalent of \eqref{norm1}. 

To be able to adapt  the  classical techniques from the $L^p$-theory  to nonstandard function spaces we assume that that space $X_w(\Omega)$ and the weight function 
$w$  possess the following properties. 

\begin{hyp}\label{Hyp-2-1}  Let   $X_w(\Omega )$  be a weighted BFS,  suppose that  $w$  is such that the following inclusions hold:
\begin{enumerate}
\item[($A$)] The operators \eqref{eq-M} and \eqref{eq-S} are bounded in $X_w(\Omega),$ i.e.  $ \cM, \cK \in [X_w (\Omega )]$ and the estimates hold
\begin{equation}\label{eq-operators}
\|\cM f\|_{X_w(\Omega)}\leq c \|f\|_{X_w(\Omega)}, \quad  \| \cK f\|_{X_w (\Omega )}\leq c \|f\|_{X_w(\Omega)}
\end{equation}
with constants independent of $f.$

\item [($B$)] There exists $p_0 \in (1,+ \infty)$ such that 
\begin{equation}\label{eq-spaces}
 X_w(\Omega) \subset L^{p}(\Omega),  \qquad \forall \ p\in[1,p_0].
\end{equation}

\end{enumerate}
\end{hyp}

\begin{hyp}\label{Hyp-2-2} 
For any bounded domain $\Omega$  with $\partial\Omega\in C^m,$  the   Sobolev-Banach  space $W_{X_w }^m (\Omega )$ has  the {\it extension property}. This means that   for each domain $\Omega'$ such that $\Omega \Subset  \Omega', $ there exists a linear bounded  operator, called   {\it extension operator}, such that 
\begin{equation}\tag{E}\label{E}
\begin{cases}
&\theta :\, W_{X_w }^{k} (\Omega)\to W_{X_w }^{k} (\Omega'), \qquad   \theta f \big\vert_{\Omega} =f,\\[6pt]
&\| \theta  f\|_{W_{X_w }^{k} (\Omega' )} \le c\| f\|_{W_{X_w }^{k} (\Omega)},\qquad   \forall\,  k=0,\ldots,m.
\end{cases}
\end{equation}
\end{hyp}

\begin{hyp}\label{Hyp-2-3}
{For a given space $X_w(\Omega)$ we suppose that  one of the following conditions hold.}
\begin{con}\label{Con-2-1}
There exist $p_{1}\in (1, \infty)$ and a constant $c>0$, such that for all disjoint  partition of $\Omega$: $\Omega=\bigcup_k \Omega_k,$  \mbox{$\Omega_k\cap \Omega_j=\emptyset ,$}  if  $ k\not= j,$  and for all $1\leq p\leq p_1,$ it holds
$$
\| f\|_{X(\Omega)} \le c\Big( \sum_{k}\| f\|_{X(\Omega_{k} )}^{p }\Big)^{1/p } \qquad  \forall\,  f\in X(\Omega ).
$$
\end{con}
\begin{con}\label{Con-2-2}
There exists $ p_2 \in (1, \infty)$ such that   
$$
L^p(\Omega)\subset  X_w (\Omega), \qquad \forall \  p\in[p_2,\infty).
$$
\end{con}
\end{hyp}

 \begin{remark}\label{Remark 2.1.} 
{Let us note that the Condition~\ref{Con-2-1} is verified if $X(\Omega)\equiv L^p(\Omega).$ }

{ Concerning the Condition~\ref{Con-2-2} we can construct the following example.  Let $w$ be a positive weight belonging to    $L^{1+0}(\Omega)=\cup_{\delta>0}L^{1+\delta}(\Omega),$ and consider the space $X_w(\Omega)\equiv L^{p(\cdot)}_w(\Omega)$ with $p(\cdot)$ measurable, verifying 
\begin{equation}\label{eq-p}
1<p_+\leq p_+<\infty, \qquad\quad
p_{-} =\esi_{\Omega }  p(x), \quad 
p_{+} =\es_{\Omega } p(x).
\end{equation}
There exists $\alpha>1$ such that $w\in L^\alpha(\Omega)$ and if we denote $p_1(\cdot)=\alpha'p(\cdot),$ then 
$$
p_1^-=\alpha' p^-, \qquad  p_1^+ =\alpha' p^+
$$ 
and the following continuous embeddings hold
\begin{equation}\label{eq-p1}
L^{p_1^+}(\Omega) \hookrightarrow  L^{p_1(\cdot)}(\Omega) \hookrightarrow L^{p_1^-}(\Omega).
\end{equation}
Applying the H\"older inequality we obtain
$$
\int_\Omega |f(x)|^{p(x)}w(x)\, dx\leq  \|w\|_{L^\alpha(\Omega)}    \left(   \int_\Omega |f(x)|^{p_1(x)}\,dx \right)^{\frac1{\alpha'}}
$$
that implies $L^{p_1^+}(\Omega)\hookrightarrow  L^{p(\cdot)}_w(\Omega)$ and hence 
$$
L^p(\Omega) \hookrightarrow L^{p(\cdot)}_w(\Omega)\qquad \forall\,  p\in [ p_1^+,\infty).
$$}

In addition, the classical weighted Lebesgue spaces  $L^p_w(\Omega), p\in(1,\infty), w\in A_p(\Omega)$  verify  the Properties~\ref{Hyp-2-1},{}~\ref{Hyp-2-2}, and~\ref{Hyp-2-3} (cf. \cite{14} .

It is easy to see that the  \eqref{E} holds for any weighted space $X_w(\Omega), $ just extending the  functions  as zero out of $\Omega,$ while 
 for the Sobolev spaces it is not so obvious. This property is well-known in the  case of classical Sobolev spaces $W^{p,m} (\Omega ), p\in(1,+\infty)$ (see \cite{3})  and weighted  Sobolev spaces {$W^{p,m}_w(\Omega)$} with a Muckenhoupt weight (see \cite{14,CF,17}). In the our case,  we assert  that the extension property holds  for all  $k=0,\ldots,m,$

Concerning the Conditions~\ref{Con-2-1} and~\ref{Con-2-2} they are not mutually exclusive and we suppose that for a given BFS at least one of them holds true.
\end{remark}


\section{Statement of the problem}

We consider the $m$-order linear differential operator 
\begin{equation} \label{eq-L} 
\cL(x,D)=\sum _{|\alpha|\le m}a_{\alpha} (x)  D^{\alpha} ,      
\end{equation}
with   $m$ being  {\it even number.}  

The operator  $\cL(x,D)$ is {\it  uniformly  elliptic} that is, 
 there exist  positive constants $\lambda$ and $\Lambda,$ such that 
\begin{equation}\label{el-cond}
\lambda |\xi|^{m}\leq \sum_{|\alpha|=m}a_\alpha(x) \xi^\alpha\leq \Lambda |\xi|^{m},\quad \text{for a.a. } x\in\Omega, \  \forall\, \xi\in\R^n.
\end{equation}

We say that the operator $\cL$ satisfies in $x_0\in \Omega$ the \eqref{eq-P}-property  if there exists a ball $\B_r(x_0)\Subset \Omega $ and functions $ g_\alpha \in L^\infty(\B_r(x_0)) ,$  $ |\alpha|=m,$ continuous in $x_0,$ such that 
\begin{equation}\tag{$P_{x_0}$}\label{eq-P}
\begin{cases}
 a_\alpha(x)=  g_\alpha(x) & \text{for a.a. } x\in \B_{r} (x_{0} )\setminus \{x_0\},\\
\|g_\alpha -g_\alpha(x_0)\|_{ L^\infty(\B_r(x_0))}\to 0 &\text{as  } r \to 0,  \ \forall \, |\alpha|=m,\\ 
a_{\alpha} \in L^{\infty } (\B_{r} (x_{0})) & \forall \, \alpha: \  |\alpha|< m.
\end{cases}
\end{equation}

Let   $x_0\in \Omega$ be a point in  which \eqref{el-cond} and   \eqref{eq-P}  hold. Consider  the  {\it tangential operator}  
\begin{equation} \label{eq-Lx0} 
L_{x_0} =\sum_{|\alpha|=m}a_{\alpha}(x_{0} )D^{\alpha} 
\end{equation} 
where
under the value $a_\alpha(x_0)$ we understand the value of $g_\alpha(x_0)$ defined by $(P_{x_0})$.
 The {\it fundamental solution} $J_{x_{0} }$ of the equation   $L_{x_{0} } \varphi =0$  is called a {\it parametrix} for the equation $\cL\varphi=0,$ having  a singularity at the point $x_{0}. $ 
By the properties of the fundamental solution, for any multiindex $\beta$ we have
\begin{equation}\label{eq2a}
\begin{split}
&i) \   D^\beta J_{x_0}(x)\in C^\infty(\R^n\setminus\{0\}), \quad \forall \  \beta,\\
&ii) \  D^\beta J_{x_0}(\mu x)=\mu^{-n} D^\beta J_0(x),	\quad \forall \ \beta: |\beta|=m,\\
&iii) \   \int_{\SF^{n-1}} D^\beta J_{x_0}(\xi)\, d\sigma_\xi=0, \quad \forall  \  \beta: |\beta|=m,\\
&iv)  \   |D^{\beta} J_{x_0}(x)|\leq C|x|^{m-n-|\beta|}, \qquad  |\beta|=0,1,2,\ldots, m.
\end{split}
\end{equation}
These properties ensure  that the $m$-order derivatives of  $J_{x_0}$ are   {\it Calder\'on-Zygmund kernels } {(see \cite{23,24} and the references therein for more details). It is well know, 
from the classical theory,   that for any  function $\varphi\in C^m_0(\Omega)$  the following representation holds}
\begin{equation}\label{eq-phi}
\varphi(x)=\int_\Omega J_{x_0}(x-y)(L_{x_0}-\cL)\varphi(y)\, dy + \int_\Omega J_{x_0}(x-y)\cL\varphi(y)\, dy.
\end{equation}
Introducing the operators 
\begin{equation}\label{eq-Sx0}
\cS_{x_{0} }\varphi(x)=\int_\Omega  J_{x_{0} } (x-y) \varphi (y)\,dy
\end{equation}
\begin{equation}
\label{eq-Tx0} 
\begin{split}
\cT_{x_{0} } \varphi(x)&= \cS_{x_{0} } (L_{x_{0} } -\cL)\varphi(x)\\
&=     \int_\Omega J_{x_0}(x-y) \sum_{|\alpha|=m}(a_\alpha(x_0) -a_\alpha(y))D^\alpha\varphi(y)\, dy\\
& -\int_\Omega J_{x_0}(x-y)\sum_{|\alpha|<m} a_\alpha(y)D^\alpha\varphi(y)\, dy
\end{split}
\end{equation}
we can rewrite \eqref{eq-phi} in the form
$$
\varphi(x)=\cT_{x_0}\varphi(x) +\cS_{x_0} \cL\varphi(x).
$$
Our goal is to show that $\cT_{x_0}+\cS_{x_0} \cL$ is identity operator in $W^m_{X_w}(\Omega).$ 
Let us calculate  the $m$-order derivatives of  $\cS_{x_0}\varphi$  (cf. \cite{3,18,26}) 
\begin{equation}\label{eq-2b}
D^\beta\cS_{x_{0} }\varphi(x)=\int_\Omega D^\beta_x J_{x_{0} } (x-y) \varphi (y)\,dy + c\varphi(x)=\cK_\beta \varphi(x) + c\varphi(x)
\end{equation}
with $|\beta|=m.$
Hence  $\cK_\beta \varphi $ are  Calder\'on-Zygmund integrals    satisfying \eqref{eq-operators}. Calculating the   lower order derivatives we observe that they have weak singularity and can be treated as Riesz potential  \eqref{eq-R}.

\begin{lemma}[Main Lemma]\label{MainLemma}  Let the Property~\ref{Hyp-2-1}  and condition \eqref{eq-P}  hold at some point $x_{0}\in\Omega. $
 If  $\varphi \in W_{X_w ;r}^{m} (\B_{r} (x_{0}))$ has a compact support in $\B_r(x_0)\Subset\Omega,$  then   
$$
\| \cT_{x_{0} } \varphi \|_{W_{X_w ;r}^{m} (\B_{r} (x_{0} ))} \le \sigma (r)\| \varphi \|_{W_{X_w ;r}^{m} (\B_{r} (x_{0} ))} ,
$$ 
where $\sigma(r)\to 0$ as $ r\to 0$ and it  depends on the coefficients of $\cL$, but not  on $\varphi. $ 
\end{lemma}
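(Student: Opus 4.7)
The plan is to estimate $D^\beta\cT_{x_0}\varphi$ in $X_w(\B_r(x_0))$ separately for each multi-index $\beta$ with $0\le|\beta|\le m$ and then reassemble the pieces through the weights $r^{|\beta|}$ of the norm \eqref{norm2}. Two regimes will appear: the singular regime $|\beta|=m$, handled by the Calder\'on–Zygmund bound in \eqref{eq-operators}, and the weakly singular regime $|\beta|<m$, handled via the Riesz-potential estimate \eqref{Lemma 2.1} together with the maximal-function bound in \eqref{eq-operators}. Two kinds of smallness will emerge: $\|a_\alpha-a_\alpha(x_0)\|_{L^\infty(\B_r(x_0))}=\overline{\overline{o}}(1)$ for $|\alpha|=m$, coming from \eqref{eq-P}, and positive powers $r^{m-|\alpha|}$ for $|\alpha|<m$.

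For $|\beta|=m$ I would differentiate each integrand of \eqref{eq-Tx0} via the identity \eqref{eq-2b}. Setting
$\psi_\alpha(y):=(a_\alpha(x_0)-a_\alpha(y))D^\alpha\varphi(y)$ for $|\alpha|=m$ and
$\eta_\alpha(y):=a_\alpha(y)D^\alpha\varphi(y)$ for $|\alpha|<m$, one obtains
$$
D^\beta\cT_{x_0}\varphi=\sum_{|\alpha|=m}\bigl(\cK_\beta\psi_\alpha+c\,\psi_\alpha\bigr)-\sum_{|\alpha|<m}\bigl(\cK_\beta\eta_\alpha+c\,\eta_\alpha\bigr),
$$
with $\cK_\beta$ being Calder\'on–Zygmund operators by \eqref{eq2a}(ii)–(iii). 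The lattice property (P2) yields $\|\psi_\alpha\|_{X_w}\le\|a_\alpha-a_\alpha(x_0)\|_{L^\infty(\B_r)}\|D^\alpha\varphi\|_{X_w}$ and $\|\eta_\alpha\|_{X_w}\le\|a_\alpha\|_{L^\infty(\B_r)}\|D^\alpha\varphi\|_{X_w}$, while \eqref{eq-operators} controls both $\cK_\beta\psi_\alpha$ and $\cK_\beta\eta_\alpha$ in $X_w$. After multiplication by $r^m$ each contribution takes the form $\overline{\overline{o}}(1)\cdot r^m\|D^\alpha\varphi\|_{X_w}$ or $\|a_\alpha\|_{L^\infty}\cdot r^{m-|\alpha|}\cdot r^{|\alpha|}\|D^\alpha\varphi\|_{X_w}$, both absorbable into $\sigma(r)\|\varphi\|_{W^m_{X_w;r}}$.

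For $|\beta|<m$ I would differentiate under the integral. By \eqref{eq2a}(iv), $|D^\beta_x J_{x_0}(x-y)|\le C|x-y|^{m-n-|\beta|}$ is a weakly singular kernel with exponent $\gamma:=m-|\beta|>0$. Because $\supp\varphi\Subset\B_r(x_0)$ and $x\in\B_r(x_0)$ imply $|x-y|\le 2r$, the estimate \eqref{Lemma 2.1} gives
$$
|D^\beta\cT_{x_0}\varphi(x)|\le C\,r^{m-|\beta|}\cM\!\Bigl(\sum_{|\alpha|=m}|\psi_\alpha|+\sum_{|\alpha|<m}|\eta_\alpha|\Bigr)(x).
$$
Applying $\|\cdot\|_{X_w}$ and the $X_w$-boundedness of $\cM$ from \eqref{eq-operators}, then multiplying by $r^{|\beta|}$, reproduces the same two kinds of small factors as above.

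Collecting the contributions for $|\beta|=0,\ldots,m$ yields the claim with
$$
\sigma(r)=c\Bigl(\max_{|\alpha|=m}\|a_\alpha-a_\alpha(x_0)\|_{L^\infty(\B_r(x_0))}+r\max_{|\alpha|<m}\|a_\alpha\|_{L^\infty(\B_r(x_0))}\Bigr),
$$
which vanishes as $r\to 0^+$ by \eqref{eq-P}. The main obstacle will be the top-order case $|\beta|=m$: the kernel is then genuinely Calder\'on–Zygmund, so the small $L^\infty$-factor must be extracted \emph{outside} the integral before $\cK_\beta$ is applied, which is precisely why the lattice property of the BFS and the continuity clause of \eqref{eq-P} are both indispensable. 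A secondary technical point is legitimising the pointwise identity \eqref{eq-2b} and the differentiation under the integral when $\varphi$ is merely in $W^m_{X_w;r}$ rather than $C^m_0$; this will be handled by a density approximation inside the subspace of compactly supported $W^m_{X_w}$-functions.
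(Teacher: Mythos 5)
Your argument follows the paper's proof in all essentials: the same splitting of $(L_{x_0}-\cL)\varphi$ into a top-order part carrying the $\overline{\overline{o}}(1)$ factor from \eqref{eq-P} and a lower-order part carrying positive powers of $r$, the Calder\'on--Zygmund bound \eqref{eq-operators} for $|\beta|=m$ via \eqref{eq-2b}, the Riesz-potential/maximal-function estimate \eqref{Lemma 2.1} combined with \eqref{eq-operators} for $|\beta|<m$, and the reassembly through the weights $r^{|\beta|}$ of the norm \eqref{norm2}; your term-by-term bookkeeping with $\psi_\alpha,\eta_\alpha$ versus the paper's grouped $\psi_1,\psi_2$ is only cosmetic.

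The one step I would not leave as stated is your closing remark that the validity of \eqref{eq-2b} for $\varphi\in W_{X_w;r}^{m}$ will be ``handled by a density approximation inside the subspace of compactly supported $W^m_{X_w}$-functions'': smooth compactly supported functions are in general \emph{not} dense in $W^m_{X_w}$ for the non-separable spaces the paper targets (grand Lebesgue, Marcinkiewicz), so an approximation in the $X_w$-norm is not available. The paper circumvents this via Property~\ref{Hyp-2-1}(B): the embedding \eqref{eq-spaces} gives $W_{X_w}^m(r)\subset W^{1,m}(r)$ (indeed $W^{p_0,m}$), where the representation \eqref{eq-3-2} is classical and holds almost everywhere; the $X_w$-boundedness of $\cM$ and $\cK$ is then invoked only for the norm estimates. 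With that substitution for your density step, your proof is complete and coincides with the paper's.
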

\begin{proof}
Following \cite{8}  we assume that $x_0 =0$ and simplify the notation writing   $\cS_0 $, $L_0 $ and $\cT_0 .$ 
 Let $n\ge 3$ be an  odd number, in case 
of even dimension   it can be  introduced  a fictitious
new variable and extend all functions as constants along the new variable.  
  Take  an arbitrary  $\varphi \in W_{X_w}^m (r)$   with a compact support in $\B_r,$ then
\begin{equation}\label{eq-psi}
\begin{split}
 (L_{0} -\cL) \varphi(x)&= \sum_{|\alpha|=m} (a_\alpha(0)-a_\alpha(x)) D^\alpha \varphi(x)\\
& -\sum_{|\alpha|<m} a_\alpha(x)D^\alpha\varphi(x)
 =:\psi_1 (x) -\psi_2 (x).
\end{split}
\end{equation}
Hence
\begin{equation}\label{eq-Tx0a}
\begin{split}
\cT_0\varphi(x)=& \cS_0\psi_1(x)+\cS_0\psi_2(x)\\
=& \sum_{|\alpha|=m}\int_\Omega J_0 (x-y)  \big(a_\alpha(0) -a_\alpha(y)\big)D^\alpha\varphi(y) \, dy\\
-&  \sum_{|\alpha|<m} \int_\Omega J_0 (x-y)  a_\alpha(y)D^\alpha\varphi(y) \, dy.
\end{split}
\end{equation}
 Since  $\|a_{\alpha}(0)-a_\alpha\|_{L^\infty(r)}\to 0$ as $ r\to 0,$ we have
\begin{equation}\label{eq-psi1}
\begin{split}
\| \psi _{1} \|_{X_w (r)}< \overline{\overline{o}}(1) \| D^m \varphi \|_{X_w (r)},\quad 
\|\psi_2\|_{X_w(r)} <  c \sum_{k<m}\|D^k\varphi\|_{X_w(r)}.
\end{split}
\end{equation}

In order to estimate the Sobolev-Banach norm of \eqref{eq-Tx0a}  we need to calculate the derivatives up to order $m.$ Let $\beta$ be a multi-index and  for   
$|\beta|<m$  we have 
$$ 
 D^\beta  \cT_0\varphi(x)=\int_{\B_{r} } D_{x}^\beta J_{0} (x-y) \psi(y)\,dy .
$$ 
By \eqref{eq2a}    we have 
$$
|D^\beta \cT_0\varphi(x)|\leq c \int_{\B_r }|x-y|^{m-n-|\beta|}  |\psi (y)|\,dy=c \cR_\gamma| \psi(x)|,
$$
where $\cR_\gamma$ is  defined by     \eqref{eq-R}   with $\gamma =m-|\beta|.$ 
Applying \eqref{Lemma 2.1}  and  Property~\eqref{Hyp-2-1} 
we obtain
$$
\| D^\beta  \cT_0\varphi\| _{X_w  (r)} \leq  c  r^\gamma  \| \cM \psi \|_{X_w (r)} \leq c r^\gamma \|\psi \|_{X_w(r)}.
$$
Making use of  \eqref{eq-psi1} we get
\begin{equation} \label{eq-3-1}
\begin{split}
r^{|\beta|}& \| D^\beta \cT_0\phi  \| _{X_w (r)} \leq c r^{m} \| \psi \| _{X_w (r)}
\leq c  r^{m} \big( \| \psi _1 \|_{X_w (r)} +\| \psi_2 \|_{X_w (r)} \big)\\ 
&\leq  
c \Big(\overline{\overline{o}}(1)  r^{m} \| D^m \varphi \|_{X_w (r)}  +\sum_{k\leq m-1} r^{m-k}  r^k \| 
D^k \varphi \| _{X_w (r)}  \Big)\\  
&\leq
 c \Big( \overline{\overline{o}}(1)r^{m} \| D^m \varphi\|_{X_w (r)}  +r\sum_{k\leq m-1} r^k \| D^k \varphi \|_{X_w (r)}  \Big)\\
& \leq \sigma(r) \|\varphi\|_{W_{X_w ;r}^{m}(r)},
\end{split}
\end{equation}
for $r$ small enough and $\sigma(r)$ vanishing function  as $r\to 0.$

Consider now the case $|\beta|=m$. By 
\eqref{eq-spaces} it follows that $W_{X_w}^m (r)\subset W^{1,m} (r )$ and therefore,  it holds (cf. \cite{3})
\begin{equation} \label{eq-3-2} 
D^{\beta} \cT_0\varphi (x)=\int_{\B_r } D^\beta J_{0} (x-y)\psi (y)\, dy+c\psi (x)  \quad \text{ for a.a. } \     x\in\B_{r}                        
\end{equation} 
 where $c$ depends on known quantities but not on  $r$ and $\psi. $
 By the properties of  the kernel it follows that  $D^\beta J_{0} $  is   singular for each $|\beta|=m$ and therefore  by \eqref{eq-operators} and  from \eqref{eq-3-2} we have 
$$
\| D^\beta \cT_0\varphi \|_{X_w(r)} \leq c\| \psi \|_{X_w(r) } .
$$
Hence the following estimate  holds 
\begin{align*}
r^m  \| D^\beta \cT_0\varphi &\|_{X_w (r)} \leq  c r^m \|  \psi \|_{X_w (r)}  \le c r^m \Big(\| \psi_1 \|_{X_w (r)} +\| \psi_2 \|_{X_w(r) }\Big)\\ 
&\leq  c\Big(\overline{\overline{o}}(1)  \sum_{|\alpha |=m}r^m \| D^\alpha  \varphi \|_{X_w(r)}  +\sum_{|\alpha|<m} r^{m-|\alpha|} r^{|\alpha |}
 \| D^\alpha  \varphi \|_{X_w(r)}  \Big) \\ 
&\leq c \Big(\overline{\overline{o}}(1) \sum_{|\alpha|=m}r^m \| D^\alpha  \varphi \|_{X_w(r) }  +r\sum_{|\alpha |<m}r^{|\alpha| }
 \| D^{\alpha } \varphi    \|_{X_w(r)}  \Big)\\
&\leq   \sigma(r) \| \varphi \|_{W_{X_w ;r}^m (r)}.
\end{align*}  
Taking into account   \eqref{eq-3-1} we obtain 
$$
\| \cT_0\varphi \|_{W_{X_w ;r}^m (r) }  \leq  \sigma(r) \| \varphi \|_{W_{X_w ;r}^m (r) } , \qquad   \sigma(r) \to 0 \  \text{ as }  \  r \to 0.
$$

\end{proof}

 \section{Local existence of strong  solutions}\label{sec4}

In the next section we are going to obtain some local results concerning  solvability in  weighted Sobolev-Banach space builded upon  $X_w.$

\begin{lemma}\label{lem-4-2}
 Let   \eqref{el-cond}, \eqref{eq-P}, the Property~\ref{Hyp-2-1}  and  the condition  \eqref{E} hold true in a ball $\B_r (x_0)\Subset \Omega.$  Then
\begin{enumerate}
\item 
If  $\varphi \in W^m_{X_w} (\B_r (x_0))$,  then  $\varphi=\cT_{x_0 } \varphi +\cS_{x_0 } \cL\varphi. $
\item
  If for some $f\in X_w (\B_r (x_0 ))$  it  holds $\varphi =\cT_{x_0 } \varphi +\cS_{x_0 } f$, then $\varphi $ is a  strong solution of the  equation $\cL\varphi(x) =f(x)$ for almost all $x\in \B_r(x_0).$ 
\end{enumerate}
\end{lemma}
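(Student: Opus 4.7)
My plan is to treat both parts as direct consequences of the parametrix calculus developed above, using the Main Lemma as the only substantial input.

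For assertion (1), I would start from the representation formula \eqref{eq-phi}, which is classical for test functions $\varphi \in C^m_0(\B_r(x_0))$, as it simply encodes the distributional identity $L_{x_0} J_{x_0} = \delta_0$. Extension to arbitrary $\varphi \in W^m_{X_w}(\B_r(x_0))$ (interpreted with compact support, which is the case that feeds into the subsequent applications) would proceed by density and continuity: approximate $\varphi$ in the $W^m_{X_w;r}$-norm by $\varphi_k \in C^m_0(\B_r(x_0))$, write \eqref{eq-phi} for each $\varphi_k$, and pass to the limit. The continuity of both sides is already at hand: $\cT_{x_0}$ is bounded on $W^m_{X_w;r}$ by Lemma~\ref{MainLemma}; $\cL$ is bounded from $W^m_{X_w}$ into $X_w$ under \eqref{eq-P}; and $\cS_{x_0}$ maps $X_w$ into $W^m_{X_w}$ because \eqref{eq-2b} expresses its top-order derivatives as Calder\'on--Zygmund integrals plus a jump term (bounded on $X_w$ by Property~\ref{Hyp-2-1}(A)), while its lower-order derivatives are controlled by Riesz potentials via \eqref{Lemma 2.1}. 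Mollification as an approximation device is also justified by Property~\ref{Hyp-2-1}(A).

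For assertion (2), starting from $\varphi = \cT_{x_0}\varphi + \cS_{x_0} f$, I would apply the constant-coefficient tangential operator $L_{x_0}$ to both sides. Using $\cT_{x_0}\varphi = \cS_{x_0}(L_{x_0} - \cL)\varphi$ from \eqref{eq-Tx0} and the parametrix identity $L_{x_0} \cS_{x_0} g = g$ a.e.\ (valid for $g \in X_w(\B_r(x_0))$ by the kernel properties \eqref{eq2a} together with the differentiation formula \eqref{eq-2b}), one obtains
\[
L_{x_0}\varphi = L_{x_0}\cS_{x_0}(L_{x_0}-\cL)\varphi + L_{x_0}\cS_{x_0} f = (L_{x_0} - \cL)\varphi + f \quad \text{a.e. in } \B_r(x_0),
\]
which rearranges to $\cL\varphi = f$ a.e., so $\varphi$ is a strong solution.

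The principal obstacle is the rigorous justification of the parametrix identity $L_{x_0}\cS_{x_0} g = g$ at the level of a general weighted Banach function space, where $g$ is only $X_w$-integrable: the classical derivation is pointwise and presumes smoothness of $g$. I would handle it by smoothly approximating $g$ by $g_k \in C^\infty_c$, using the classical identity for each $g_k$, and passing to the limit via the boundedness $\cS_{x_0}\colon X_w \to W^m_{X_w;r}$ described above combined with density of $C^\infty_c$ in $X_w$ (again a consequence of Property~\ref{Hyp-2-1}(A)). Once this identity is established, both parts of the lemma follow by the density--continuity scheme just sketched.
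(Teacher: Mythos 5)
Your overall algebra (especially in part (2), where you apply $L_{x_0}$ and use $L_{x_0}\cS_{x_0}=\Id$ together with $\cT_{x_0}=\cS_{x_0}(L_{x_0}-\cL)$) matches the paper's, but the way you propose to justify the two key identities has a genuine gap: you repeatedly invoke density of $C^m_0$ (resp.\ $C^\infty_c$) in $W^m_{X_w}$ (resp.\ $X_w$), claiming it follows from Property~\ref{Hyp-2-1}(A). Boundedness of the maximal operator does \emph{not} imply that smooth compactly supported functions are dense, nor that mollifications converge in norm; norm-convergence of mollifiers requires absolute continuity of the norm, which fails precisely in the non-separable spaces this paper targets (the grand Lebesgue spaces $L^{q)}$ and the Marcinkiewicz spaces $M^{p,\lambda}$ are explicitly noted to be non-separable, and $\cM$ is bounded on them, yet $C^\infty_c$ is not dense). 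So both your extension of \eqref{eq-phi} from test functions to $W^m_{X_w}$ and your proof of the parametrix identity $L_{x_0}\cS_{x_0}g=g$ for $g\in X_w$ rest on a false step.

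The paper's route avoids this entirely and is the natural fix: by Property~\ref{Hyp-2-1}(B), i.e.\ \eqref{eq-spaces}, one has the continuous embedding $W^m_{X_w}\subset W^{m,p_0}$, so the representation $\varphi=\cT_{x_0}\varphi+\cS_{x_0}\cL\varphi$ and the identity $L_{x_0}\cS_{x_0}=\Id$ are first obtained \emph{almost everywhere} from the classical $L^{p_0}$-theory (where density arguments are legitimate), and only afterwards are $\cT_{x_0}$ and $\cS_{x_0}$ shown to be bounded on the $X_w$-scale (via \eqref{eq-4-1}, \eqref{eq-operators} and the Riesz-potential bound) so that the same a.e.\ identities are read as identities in $W^m_{X_w}(r)$. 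Note also that you restrict part (1) to compactly supported $\varphi$, whereas the lemma asserts it for all $\varphi\in W^m_{X_w}(\B_r(x_0))$; this is exactly where the hypothesis \eqref{E} (which your argument never uses) enters in the paper, to pass to $\overset{\circ}{W}{}^{m,p_0}(r+\delta)$ on a slightly larger ball before applying the classical representation. Replacing your density scheme by this embedding-plus-boundedness argument, and invoking \eqref{E} for general $\varphi$, repairs the proof.
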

\begin{proof}
  \mbox{(1)} Without loss of generality we may assume that $x_{0} =0\in \Omega. $ 
        Because of  \eqref{eq-spaces}
 there exists  some $ p_0 >1$ such that  
 $W_{X_w }^m (\Omega )\subset W^{p_0,m} (\Omega )$  continuously.

Since 
$\varphi \in W^{p_0,m} (r)$   than  we can  consider  $\varphi \in \overset{\circ \qquad  } {W^{m_,p_0}} (r+\delta)$ for some $\delta>0$
 small enough   
and  
  $\varphi =\cT_{0} \varphi +\cS_{0} \cL\varphi, $ (cf. \cite[Lemma A]{3}), i.e. $\Id=\cT_{0} +\cS_{0} \cL$, where $\Id$ is the
  {\it identity operator}  in $W^{p_0,m}(r). $   
  Then
 \begin{equation}\label{T0phi}
\begin{split}
\cT_0 \varphi =\cS_{0} (\cL-L_0 ) \varphi =\cS_0 \psi =\int_{\B(r+\delta)}J_{0} (x-y)\psi(y)\,dy,
\end{split}
 \end{equation}
 where $\psi= (\cL-L_0 ) \varphi  $   and   $ x\in \B_{r+\delta } . $
It is clear  that $\psi \in L^{p_0 } (r+\delta)$ and from the classical theory (cf. \cite{3,GT})  we have the representation formula 
\begin{equation} \label{eq-4-1} 
D^{\beta} \cT_0\varphi(x)  =\int_{\B_{r+\delta} } D_x^\beta J_0 (x-y)\psi(y) \,dy+ C \psi(x)                         ,
\end{equation} 
for each $|\beta|=m$ and  $C$ being a positive constant independent of $\psi.$  
                       
 Since $D_x^m J_{0} $ is a Calder\'on-Zygmund type  kernel, then from the boundedness of the  singular operators
 in $L^{p_0 } (r+\delta )$  it follows that the formula \eqref{eq-4-1} holds also  for 
 $\psi \in L^{p_0 }(r+\delta ).$
By the Property~\ref{Hyp-2-1}  and \eqref{eq-operators}  it follows that \eqref{eq-4-1} is  valid also in  $X_w (r+\delta )$  and
\begin{equation} \label{eq-4-2} 
\begin{split}
\| D^\beta \cT_0\varphi\|_{X_w(r) }& \le \| D^\beta \cT_0\varphi \|_{X_w(r+\delta )} \le c \| \psi \|_{X_w (r+\delta )}\\
& \le 
c\Big(\| L_{0} \varphi \|_{X_w(r+\delta )} +\| \cL\varphi \|_{X_w(r+\delta )} \Big).
\end{split}
\end{equation} 
  By the assumptions on the coefficients  we have $|a_{\alpha}(0)|\le \| a_{\alpha} \|_{L^{\infty } (r+\delta )} $ for all   $|\alpha |=m$. Then from \eqref{eq-4-2} it follows 
$$
\| D^m \cT_0\varphi \|_{X_w (r)} \leq c \| \varphi \|_{W_{X_w }^{m} (r+\delta )}.
$$
Since $W_{X_w }^{m} (\Omega )$ verifies the Property~\ref{Hyp-2-2}  we have 
$
\| D^m \cT_0\varphi \|_{X_w (r)} \le c \| \varphi \|_{W_{X_w}^m (r)}
$
and
$$
\| \cT_{0} \varphi \|_{W_{X_w }^m(r)} \le c \| \varphi \|_{W_{X_w }^{m} (r)}
$$ 
 that implies $\cT_{0} \in [W_{X_w }^{m} (r)]$. 

Analogously  $\cS_{0} \in [X_w(r); W_{X_w}^m (r)]$ and hence $\cS_{0} \cL\in [W_{X_w}^m (r)].$
 Then  
$$
 \varphi(x) =\cT_{0} \varphi(x) +\cS_{0} \cL\varphi(x) \quad \text{ for   a.a. } x\in \B_r,
$$
  in $L^{p_{0} } (r).$
 The estimate \eqref{eq-phi} holds also in  $W_{X_w}^m(r)$, i.e. 
\begin{equation}\label{eq-id}
\Id=\cT_{0} +\cS_{0} \cL \in [W_{X_w}^m (r)].
\end{equation}
 
\mbox{(2)}  
Let    $f\in X_w (r)$  such that  $\varphi =\cT_{0} \varphi +\cS_{0} f.$  Hence 
$$
\cL\varphi(x) =L_{0} \cS_{0} \cL\varphi(x) =L_{0} \cS_{0} f(x) = f(x)\quad \text{ for a.a. }  x\in \B_r,
$$
where we have used that
$L_{0} \cS_{0} =\Id$ in $X_w(r).$   
Then  
$
\cL\varphi(x) =f(x).
$
\end{proof}

Because of the equivalence of the norms \eqref{norm1} and \eqref{norm2} the Lemma~\ref{MainLemma} holds also in $W^m_{X_{w;r} }(r). $ 
 \begin{corollary}\label{cor-4-1}   Let $\varphi \in W^m_{X_{w;r}} (\B_r(x_0)).$  Under the conditions of   Lemma~\ref{lem-4-2} it holds
\begin{enumerate}
\item   $\varphi =\cT_{x_{0} } \varphi +\cS_{x_{0} } \cL\varphi, $  a.e. in $\B_r(x_0);$
\item  if for some  $f\in X_w (\B_{r}(x_{0} ))$ it holds $\varphi =\cT_{x_0 } \varphi +\cS_{x_0 } f$, then $\varphi $ is a solution of the equation $\cL\varphi =f.$ 
\end{enumerate}
\end{corollary}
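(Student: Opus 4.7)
The plan is to reduce both assertions directly to Lemma~\ref{lem-4-2} by exploiting the equivalence of the two Sobolev--Banach norms \eqref{norm1} and \eqref{norm2} on the fixed ball $\B_r(x_0)$.

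First I would observe that, for $r>0$ fixed, the scaling factors $r^{|\alpha|}$ with $0\le|\alpha|\le m$ are positive constants depending only on $r$ and $m$, so the norms $\|\cdot\|_{W^m_{X_w}(\B_r(x_0))}$ and $\|\cdot\|_{W^m_{X_{w;r}}(\B_r(x_0))}$ are equivalent and the two spaces coincide as sets (with equivalent topologies). In particular, $\varphi\in W^m_{X_{w;r}}(\B_r(x_0))$ if and only if $\varphi\in W^m_{X_w}(\B_r(x_0))$, and the right-hand space $X_w(\B_r(x_0))$ in which $f$ lives is itself unaffected by this rescaling (the weight $r^{|\alpha|}$ only affects the Sobolev part of the norm).

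For item (1) I would then simply invoke Lemma~\ref{lem-4-2}(1) with $\varphi$ regarded as an element of $W^m_{X_w}(\B_r(x_0))$, obtaining the pointwise identity $\varphi=\cT_{x_0}\varphi+\cS_{x_0}\cL\varphi$ a.e.\ in $\B_r(x_0)$. For item (2) the hypothesis $\varphi=\cT_{x_0}\varphi+\cS_{x_0}f$ is a pointwise identity, unchanged by the choice of equivalent norm; applying Lemma~\ref{lem-4-2}(2) then yields $\cL\varphi=f$ a.e.\ in $\B_r(x_0)$.

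There is essentially no technical obstacle: the corollary is a restatement of Lemma~\ref{lem-4-2} in the equivalent $r$-homogeneous norm, and I would only have to verify carefully that the norm equivalence transfers the representation $\Id=\cT_{x_0}+\cS_{x_0}\cL$ from $W^m_{X_w}(r)$ to $W^m_{X_{w;r}}(r)$. The actual value of the corollary is strategic rather than technical: the scaled norm \eqref{norm2} is precisely the one in which the Main Lemma gives the contraction bound $\|\cT_{x_0}\varphi\|\le\sigma(r)\|\varphi\|$ with $\sigma(r)\to 0$, so having the identity $\Id=\cT_{x_0}+\cS_{x_0}\cL$ stated in this norm will make it possible to invert $\Id-\cT_{x_0}$ by a Neumann series on sufficiently small balls in the local existence arguments to follow.
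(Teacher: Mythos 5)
Your argument is correct and is exactly the paper's route: the authors justify the corollary solely by the remark that the norms \eqref{norm1} and \eqref{norm2} are equivalent on the fixed ball, so Lemma~\ref{lem-4-2} transfers verbatim to $W^m_{X_{w;r}}(\B_r(x_0))$. Your additional observation that the pointwise identities themselves are unaffected by the choice of equivalent norm is the same (implicit) point the paper relies on, so nothing further is needed.
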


 The previous lemma  allows  us to proof the following  local  existence result. 

\begin{theorem}\label{thm-4-1}  Let \eqref{el-cond}, \eqref{eq-P},  and the  Properties~\ref{Hyp-2-1} and~\ref{Hyp-2-2} hold.
Then  the equation $\cL u=f $  admits a solution   in $W_{X_w }^m (\B_{r} (x_{0} )),$  for all  $ f\in X_w (\Omega ),$   and $r>0$ small enough. 
\end{theorem}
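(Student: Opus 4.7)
The plan is to combine the Main Lemma (which gives contractivity of $\cT_{x_0}$ for small $r$) with part (2) of Lemma~\ref{lem-4-2} (which reduces solvability of $\cL u=f$ to solving the operator equation $\varphi-\cT_{x_0}\varphi=\cS_{x_0}f$) and invert $\mathrm{Id}-\cT_{x_0}$ by a Neumann series argument in the Sobolev-Banach space equipped with the scaled norm \eqref{norm2}.

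First, I would fix $x_0\in\Omega$ and choose $r_0>0$ so small that $\B_{r_0}(x_0)\Subset\Omega$ and the representation identity $\mathrm{Id}=\cT_{x_0}+\cS_{x_0}\cL$ from Lemma~\ref{lem-4-2}(1) (equivalently Corollary~\ref{cor-4-1}(1)) holds on $\B_r(x_0)$ for every $r\in(0,r_0]$. Then I would invoke the Main Lemma to get
$$
\|\cT_{x_0}\varphi\|_{W^m_{X_w;r}(\B_r(x_0))}\le \sigma(r)\|\varphi\|_{W^m_{X_w;r}(\B_r(x_0))},\qquad \sigma(r)\to 0 \text{ as }r\to 0,
$$
and shrink $r$ further if necessary so that $\sigma(r)\le 1/2<1$. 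With this choice, $\cT_{x_0}$ is a strict contraction on $W^m_{X_w;r}(\B_r(x_0))$, so $\mathrm{Id}-\cT_{x_0}$ is a bijective bounded operator on this space with bounded inverse given by the convergent Neumann series $\sum_{k\ge 0}\cT_{x_0}^k$.

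Next, given $f\in X_w(\Omega)$, I would consider its restriction to $\B_r(x_0)$ (which lies in $X_w(\B_r(x_0))$) and form $g:=\cS_{x_0}f$. From the proof of Lemma~\ref{lem-4-2}, the operator $\cS_{x_0}$ maps $X_w(\B_r(x_0))$ boundedly into $W^m_{X_w}(\B_r(x_0))$: the top-order derivatives are treated by formula \eqref{eq-4-1} using boundedness of the Calder\'on-Zygmund operator $\cK$ granted by Property~\ref{Hyp-2-1}(A), and the lower-order derivatives are controlled via the Riesz-potential estimate \eqref{Lemma 2.1} together with boundedness of $\cM$ in $X_w$. Hence $g\in W^m_{X_w}(\B_r(x_0))\subset W^m_{X_w;r}(\B_r(x_0))$ by equivalence of the norms \eqref{norm1} and \eqref{norm2}. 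Now define
$$
u:=(\mathrm{Id}-\cT_{x_0})^{-1}g=\sum_{k=0}^{\infty}\cT_{x_0}^k g \in W^m_{X_w}(\B_r(x_0)).
$$
By construction $u-\cT_{x_0}u=\cS_{x_0}f$, i.e. $u=\cT_{x_0}u+\cS_{x_0}f$, so Lemma~\ref{lem-4-2}(2) (or Corollary~\ref{cor-4-1}(2)) yields $\cL u=f$ almost everywhere in $\B_r(x_0)$, which is the desired conclusion.

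I expect the main technical obstacle to be the justification that $\cS_{x_0}:X_w(\B_r(x_0))\to W^m_{X_w}(\B_r(x_0))$ is bounded with a constant that is well-behaved in $r$; this uses the identity \eqref{eq-4-1} for the top-order derivatives (requiring $W^m_{X_w}\subset W^{m,p_0}$ via Property~\ref{Hyp-2-1}(B) to justify differentiation under the integral sign in the strong sense), plus Property~\ref{Hyp-2-2} to control intermediate norms on $\B_r$ in terms of norms on a slightly larger ball $\B_{r+\delta}$. Everything else—the contraction/Neumann step and the final passage from the operator equation to the PDE—is formal once these mapping properties are in hand, and the Main Lemma guarantees that the contraction constant can be made arbitrarily small by shrinking $r$.
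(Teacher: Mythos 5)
Your proposal is correct and follows essentially the same route as the paper: invert $\Id-\cT_{x_0}$ on $W^m_{X_w;r}(\B_r(x_0))$ using the Main Lemma's smallness of $\sigma(r)$ (Neumann series), apply it to $\cS_{x_0}f$ using the mapping property $\cS_{x_0}\in[X_w;W^m_{X_w}]$ from the proof of Lemma~\ref{lem-4-2}, and conclude $\cL u=f$ via Lemma~\ref{lem-4-2}(2)/Corollary~\ref{cor-4-1}(2). The paper merely adds the algebraic motivation $L_{x_0}\cS_{x_0}=\Id$ and $L_{x_0}\cT_{x_0}=L_{x_0}-\cL$ before arriving at the same formula $u=(\Id-\cT_{x_0})^{-1}\cS_{x_0}f$.
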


 \begin{proof}   By the Corollary~\ref{cor-4-1} we can  give the   proof in the space $W^m_{X_{w;r} }(r), $ taking $x_{0} =0\in \Omega.$ 
 Since  $L_{0} \cS_{0} =\Id $ in $X_w (r),$ and keeping in mind \eqref{eq-Tx0} we have 
\begin{equation}\label{eq2}
L_{0} \cT_{0} =L_{0} \cS_{0} (L_{0} -\cL)=\Id (L_{0} -\cL)=L_{0} -\cL .
\end{equation}
 For any   $u \in W_{X_{w;r} }^{m}(r)$ we consider  the equation $\cL u=f.$  By  \eqref{eq2} we can rewrite it  as follows 
$$
f=\cL u= (L_{0} -L_{0} \cT_{0}) u=L_0(\Id -\cT_0) u ,
$$
where the identity  operator here acts in $W_{X_{w;r} }^{m}(r).$
Applying $\cS_0$ we obtain  
$$
\cS_0 f= \cS_0L_0(\Id -\cT_0) u = (\Id - \cT_0) u. 
$$ 
 By   Lemma~\ref{MainLemma} we have  
$$
\| \cT_{0} \| _{[W_{X_w ;r}^{m} (r)]} =\sigma(r) \to 0,\qquad \text{ as } r\to 0.
$$
 Taking $r$ small enough such that  $\| \cT_{0} \| _{[W_{X_w ;r}^{m} (r)]} <1,$ we obtain that  the operator $\Id  -\cT_{0} $ is boundedly invertible in $W_{X_w ;r}^{m} (r)$ and  by Lemma~\ref{lem-4-2} the function
  $$
u=(\Id -\cT_{0} )^{-1} \cS_{0} f,
$$ 
 is a solution of the equation $\cL u=f$   in $W_{X_w ;r}^{m} (r).$

\end{proof}

\section{Interior Schauder type estimates}\label{sec5}

 Our goal  now is to obtain local  interior Schauder type estimates for the solutions of  $\cL \varphi=f$ in 
 $W_{X_w}^m (\Omega).$   For this purpose  we need some auxiliary lemma.

 Let $\omega\in C_0^\infty([0,1]$ be  such that 
$$
\omega (t)=\begin{cases}
1 &\quad 0\le t<\frac{1}{3}, \\ 
0 &\quad  \frac{2}{3} <t\le 1 .
\end{cases} 
$$ 
Then we can define a cut-off function $\xi\in C_0^\infty(\B_{r_2})$ as
\begin{equation} \label{eq-5-1} 
\xi (x)=
\begin{cases} 1&\quad |x|\le r_{1}, \\
\omega\Big(\frac{|x|-r_{1} }{r_{2} -r_{1} } \Big) &\quad  r_{1} <|x|\le r_{2},
\end{cases}  
\end{equation} 
for any   $0<r_1 <r_2\leq 1 .$   The norm of $\xi$ is bounded,  as it is proved in  \cite{9}
 $$
\| \xi \|_{C^m (r_2)} \le c\Big(1-\frac{r_{1} }{r_{2} } \Big)^{-m} 
$$ 
with a constant independent of $r_1$ and $r_2.$

\begin{lemma}\label{lem-5-2}
Let the conditions of Theorem~\ref{thm-4-1} be   fulfilled in $\B_{r_2}\Subset \Omega. $   Then  for any $0<r_1<r_2$ as in \eqref{eq-5-1}  and  $ u\in W_{X_w ;r_2 }^{m} (\Omega )$ the following estimate holds 
$$
\| u\|_{W_{X_w ;r_1 }^m(r_1) } \le c\Big(1-\frac{r_{1} }{r_{2} } \Big)^{-m}\Big (\| \cL u\| _{X_w (r_2 )} +
\| u\|_{W_{X_w ;r_2 }^{m-1} (r_2 )}\Big)  
$$ 
 with a constant   independent  of   $r_1, r_2,$ and $u.$
\end{lemma}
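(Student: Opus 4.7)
The plan is to test the representation formula of Corollary~\ref{cor-4-1} not on $u$ itself (which need not be compactly supported in $\B_{r_2}$) but on the truncated function $v=\xi u$ with $\xi$ the cut-off from \eqref{eq-5-1}. Taking $x_0=0$ without loss of generality, $v$ lies in $W_{X_w;r_2}^m(\B_{r_2})$ and is compactly supported in $\B_{r_2}$, so Corollary~\ref{cor-4-1} yields $v=\cT_0 v+\cS_0\cL v$. If $r_2$ is chosen small enough, the Main Lemma~\ref{MainLemma} gives $\|\cT_0\|_{[W_{X_w;r_2}^m(r_2)]}\le\sigma(r_2)<1$, so that $\Id-\cT_0$ is boundedly invertible on $W_{X_w;r_2}^m(\B_{r_2})$ and hence
\[
\|v\|_{W_{X_w;r_2}^m(r_2)}\le c\,\|\cS_0\cL v\|_{W_{X_w;r_2}^m(r_2)}.
\]

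Next I would establish the auxiliary bound $\|\cS_0 g\|_{W_{X_w;r_2}^m(r_2)}\le c\,r_2^m\|g\|_{X_w(r_2)}$ by reusing the dichotomy from the proof of Lemma~\ref{MainLemma}. When $|\beta|=m$, identity \eqref{eq-2b} and the Calder\'on-Zygmund bound in \eqref{eq-operators} give $\|D^\beta\cS_0 g\|_{X_w(r_2)}\le c\|g\|_{X_w(r_2)}$, so $r_2^{|\beta|}\|D^\beta\cS_0 g\|_{X_w(r_2)}\le c\,r_2^m\|g\|_{X_w(r_2)}$. When $|\beta|<m$, the kernel $D^\beta J_0$ has a weak singularity of order $n-(m-|\beta|)$; inequality \eqref{Lemma 2.1} combined with the maximal bound in \eqref{eq-operators} then yields $\|D^\beta\cS_0 g\|_{X_w(r_2)}\le c\,r_2^{m-|\beta|}\|g\|_{X_w(r_2)}$, whence again $r_2^{|\beta|}\|D^\beta\cS_0 g\|_{X_w(r_2)}\le c\,r_2^m\|g\|_{X_w(r_2)}$. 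Summing over $|\beta|\le m$ proves the claim.

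The third ingredient is the Leibniz expansion
\[
\cL(\xi u)=\xi\,\cL u + \sum_{|\alpha|\le m}a_\alpha(x)\sum_{0<\gamma\le\alpha}\binom{\alpha}{\gamma}D^\gamma\xi\,D^{\alpha-\gamma}u,
\]
whose commutator part involves only $u$-derivatives of order at most $m-1$. From \eqref{eq-5-1} one has $\|D^\gamma\xi\|_{L^\infty}\le c(r_2-r_1)^{-|\gamma|}=c\,r_2^{-|\gamma|}(1-r_1/r_2)^{-|\gamma|}$, and $a_\alpha\in L^\infty(\B_{r_2})$ by \eqref{eq-P}. Grouping by $k=|\alpha|-|\gamma|\in\{0,\dots,m-1\}$ and writing $\|D^k u\|_{X_w(r_2)}=r_2^{-k}\cdot r_2^k\|D^k u\|_{X_w(r_2)}\le r_2^{-k}\|u\|_{W_{X_w;r_2}^{m-1}(r_2)}$, one arrives at
\[
\|\cL v\|_{X_w(r_2)}\le\|\cL u\|_{X_w(r_2)}+c\,r_2^{-m}\Big(1-\tfrac{r_1}{r_2}\Big)^{-m}\|u\|_{W_{X_w;r_2}^{m-1}(r_2)}.
\]

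Combining the three preceding displays, multiplying by $r_2^m$ and absorbing $r_2^m\le 1\le(1-r_1/r_2)^{-m}$ into the constant, I conclude
\[
\|v\|_{W_{X_w;r_2}^m(r_2)}\le c\Big(1-\tfrac{r_1}{r_2}\Big)^{-m}\Big(\|\cL u\|_{X_w(r_2)}+\|u\|_{W_{X_w;r_2}^{m-1}(r_2)}\Big).
\]
Since $\xi\equiv 1$ on $\B_{r_1}$ and $r_1\le r_2$ implies $r_1^{|\alpha|}\le r_2^{|\alpha|}$, the trivial embedding $\|u\|_{W_{X_w;r_1}^m(r_1)}\le\|v\|_{W_{X_w;r_2}^m(r_2)}$ closes the argument. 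The main technical obstacle is the bookkeeping of $r_2$-powers: the weight $r_2^{|\beta|}$ of the outer Sobolev norm, the $r_2^{m-|\beta|}$ from the Riesz-potential estimate of $\cS_0$ on lower derivatives, and the $r_2^{-|\gamma|}$ from the cut-off's derivatives must be arranged so that, after summing, only the desired factor $(1-r_1/r_2)^{-m}$ survives; this requires care in the Leibniz grouping described above.
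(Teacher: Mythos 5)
Your argument is correct and follows essentially the same route as the paper: truncate with the cut-off $\xi$ from \eqref{eq-5-1}, apply the representation of Corollary~\ref{cor-4-1} to $\xi u$, absorb $\cT_0$ via Lemma~\ref{MainLemma} (the paper uses the $\tfrac12$-bound directly rather than invertibility of $\Id-\cT_0$, which is the same thing), estimate $\cS_0$ by the same Calder\'on--Zygmund/Riesz-potential dichotomy as in \eqref{eq-5-4}--\eqref{eq-5-6}, and split $\cL(\xi u)=\xi\,\cL u+M(u;\xi)$ exactly as in \eqref{eq-5-7}, your explicit Leibniz bookkeeping being the estimate the paper imports from the cited reference in \eqref{eq3}.
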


\begin{proof} 
  Take  $\varphi =\xi  u \in W_{X_w ;r_2 }^m (r_2 )$    with a compact support in  $\B_{r_2 }. $  Then by Corollary~\ref{cor-4-1}  we have 
\begin{equation} \label{eq-5-2} 
\varphi =\cT_{0} \varphi +\cS_{0} \cL \varphi           \end{equation} 
and by Lemma~\ref{MainLemma} there exists $ r>0$ such  small that  
$$
\| \cT_{0} \varphi \|_{W_{X_w ;r_2 }^m (r_2 )} \le \frac{1}{2} \| \varphi \|_{W_{X_w ;r_2 }^m (r_2 )} 
$$ 
holds for all  $r_2 \in (0,r).$   Then by  \eqref{eq-5-2} we obtain 
\begin{equation} \label{eq-5-3} 
\| \varphi \|_{W_{X_w ;r_2 }^m(r_{2} )} \le 2\| \cS_{0} \cL \varphi \|_{W_{X_w ;r_2 }^{m} (r_2 )}                    \end{equation} 
where 
\begin{equation}\label{eq-S0}
\cS_{0} \cL\varphi(x) =\int _{\B_{r_2 } }J_{0} (x-y)\cL\varphi (y)\,dy.
\end{equation}
Calculating  the higher order  derivatives we obtain
$$
D^\beta \cS_{0} \cL\varphi(x) =\int_{\B_{r_2 } } D_x^\beta J_{0} (x-y)\cL\varphi (y)\,dy +c\cL\varphi (x),\quad |\beta|=m
$$ 
{with a constant $c$ independent of $\varphi.$
The integral operator   is  of Calder\'on-Zygmund type and, by  Property~\ref{Hyp-2-1},    the following  estimate holds}
\begin{equation} \label{eq-5-4} 
\| D^m \cS_{0} \cL\varphi \|_{X_w (r_2)} \le c\| \cL\varphi\|_{X_w (r_2)}  .      \end{equation} 
For the lower order derivatives of \eqref{eq-S0} we have the following expression
$$
D^\beta \cS_{0} \cL\varphi(x) =\int _{\B_{r_2 } }D_x^\beta J_{0} (x-y)\cL\varphi (y)\,dy,\quad |\beta|<m . 
$$ 
In this case the  kernel has a weak singularity and the integral operator is a Riesz type integral  \eqref{eq-R} that  we can estimate   as 
\begin{equation} \label{eq-5-5} 
r_2^{|\beta|} \| D^\beta \cS_{0} \cL\varphi \|_{X_w(r_2)} \le c r_2^m \| \cL\varphi \|_{X_w (r_2)}                            
\end{equation} 
with a  constant independent of $r_2$ and $\varphi. $  Unifying  \eqref{eq-5-4} and \eqref{eq-5-5}   we obtain 
\begin{equation} \label{eq-5-6} 
\| \cS_{0} \cL\varphi \| _{W_{X_w ;r_2 }^m (r_2 )} \le c r_2^m \| \cL\varphi \| _{X_w(r_2 )}  .                                    
\end{equation} 
On the other  hand, it is easy to see that $\cL\varphi $ can be represented in the form 
\begin{equation} \label{eq-5-7} 
\cL\varphi(x) =\xi(x)  \cL u(x)+M(u;\xi ) ,                                              
\end{equation} 
where $M(u;\xi )$ is a linear combination of derivatives $D^\alpha u$, the order of which does not exceed $(m-1)$, multiplied by the derivatives of $\xi $ of order at most $m.$  Precisely 
$$
M (u;\xi )=\sum_{\overset{|\alpha|+|\alpha'|\leq m}{ |\alpha|<m}} c_{\alpha}(x) D^{\alpha'} \xi(x) D^\alpha u(x).
$$ 
Following   \cite{9} we obtain analogously 
\begin{equation}\label{eq3}
r_2^m \| M(u;\xi ) \|_{X_w (r_2)} \le c \| \xi \|_{C^m (r_2 )} \| u\|_{W_{X_w ;r_2 }^{m-1} (r_2 )}  .
\end{equation}
Unifying \eqref{eq3} and  \eqref{eq-5-6}, we obtain
\begin{equation} \label{eq-5-8}
\begin{split}
\| \cS_{0} \cL u &\|_{W_{X_w ;r_1 }^m (r_1 )} \leq \| \cS_{0} \cL\varphi \| _{W_{X_w ;r_2 }^m (r_2 )}\\
 &\le c r_2^m \Big(\|\xi\|_{C^m(r_2)}\|\cL u\|_{X_w(r_2)} + 
\|M(u,\xi)\|_{X_w(r_2)}\Big)\\
&\leq  c \|\xi\|_{C^m(r_2)} \Big(\|\cL u\|_{X_w(r_2)} +  \| u\|_{W_{X_w ;r_2 }^{m-1} (r_2)}\Big)\\
&\leq c  \Big(1-\frac{r_{1} }{r_{2} } \Big)^{-m}    \Big( \|\cL u\|_{X_w(r_2)} +  \| u\|_{W_{X_w;r_2 }^{m-1} (r_2)}\Big).
\end{split}
\end{equation}
Then making use of \eqref{eq-5-3} we  obtain the desired estimate
\begin{align*}
\|  u \|_{W_{X_w ;r_1 }^m (r_1 )}& = \|  \varphi \|_{W_{X_w ;r_1 }^m (r_1 )} \leq 2 \|\cS_0 \cL\varphi\|_{W_{X_w ;r_1 }^m (r_1 )}\\[6pt]
&\leq  c  \Big(1-\frac{r_{1} }{r_{2} } \Big)^{-m}    \Big( \|\cL u\|_{X_w(r_2)} +  \| u\|_{W_{X_w;r_2 }^{m-1} (r_2)}\Big).
\end{align*}

\end{proof}

In order to establish interior Schauder's estimate we need the following result.
\begin{lemma}\label{lem-5-3}
Suppose that  the space $X_w(\Omega)$ possesses the Properties~\ref{Hyp-2-1},{}~\ref{Hyp-2-2} and~\ref{Hyp-2-3}.  Then
$$
\| u\|_{W_{X_w }^{k} (\Omega )} \le \varepsilon \| u\|_{W_{X_w }^{k+1} (\Omega)} +\frac{c}{\varepsilon^{p^k } } \| u\|_{X_w (\Omega )} 
$$ 
for some $p>1$ and for all $ k=1,\ldots, m-1.$ 
\end{lemma}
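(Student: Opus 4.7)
The plan is to establish the inequality for the base case $k = 1$ and then iterate. The base case to aim for is
$$\|Du\|_{X_w(\Omega)} \le \varepsilon \|D^2 u\|_{X_w(\Omega)} + c\,\varepsilon^{-p}\|u\|_{X_w(\Omega)}$$
for some $p \ge 1$. To prove it, first extend $u$ to $\tilde u \in W^2_{X_w}(\Omega')$ via Property~\ref{Hyp-2-2} with $\Omega \Subset \Omega'$, then multiply by a smooth cutoff to obtain a compactly supported function on $\R^n$. Fix a standard mollifier $\rho_\delta$ with $0 < \delta \ll 1$ and decompose
$$D\tilde u(x) = (D\rho_\delta) * \tilde u(x) + \bigl(D\tilde u(x) - \rho_\delta * D\tilde u(x)\bigr).$$
For the first term, $|D\rho_\delta| \le c\,\delta^{-n-1}\chi_{\B_\delta}$ gives $|(D\rho_\delta) * \tilde u(x)| \le c\,\delta^{-1}\cM\tilde u(x)$. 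For the commutator, the fundamental theorem of calculus applied along the segment from $y$ to $x$ yields $|D\tilde u(x) - \rho_\delta * D\tilde u(x)| \le c\,\delta\,\cM(D^2\tilde u)(x)$. Taking $X_w$-norms, invoking Property~\ref{Hyp-2-1}(A) to absorb the maximal functions, and setting $\delta = \varepsilon$ yields the base case with $p = 1$ (which, after enlarging $p$, is consistent with $p > 1$ as stated).

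Next I would iterate. Applying the base case componentwise to the $(k-1)$-th derivatives of $u$ in place of $u$ produces
$$\|D^k u\|_{X_w} \le \eta \|D^{k+1}u\|_{X_w} + c\,\eta^{-p}\|D^{k-1}u\|_{X_w},$$
and substituting the inductive hypothesis
$$\|D^{k-1}u\|_{X_w} \le \zeta \|D^k u\|_{X_w} + c\,\zeta^{-p^{k-1}}\|u\|_{X_w}$$
gives
$$\|D^k u\|_{X_w} \le \eta\|D^{k+1}u\|_{X_w} + c\,\eta^{-p}\zeta\,\|D^k u\|_{X_w} + c^2\,\eta^{-p}\zeta^{-p^{k-1}}\|u\|_{X_w}.$$
Choosing $\zeta = \eta^{p}/(4c^2)$ lets us absorb the middle term into the left-hand side. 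The resulting exponent of $\eta^{-1}$ on the $\|u\|_{X_w}$-term is $p + p\cdot p^{k-1} = p + p^k$, which is bounded by $p^{k+1}$ after absorbing into a slightly larger fixed $p > 1$. Relabelling $\eta \sim \varepsilon$ and rescaling constants completes the induction step. Summing the bounds for $\|D^j u\|_{X_w}$ over $j = 0,\ldots,k$ and using the trivial monotonicity $\|u\|_{W^{k+1}_{X_w}} \ge \|u\|_{W^k_{X_w}}$ assembles the final Sobolev-norm inequality.

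The main obstacle is the base case. Classical interpolation between Sobolev norms in $L^p$ relies on the $p$-homogeneous structure of the norm, which is not available in a general BFS. The plan circumvents this by controlling derivatives pointwise through the Hardy--Littlewood maximal function, a step that crucially relies on Property~\ref{Hyp-2-1}(A) (boundedness of $\cM$ in $X_w$). Should that pointwise route run into technical difficulty near $\partial\Omega$, an alternative is to borrow the interpolation from the classical $L^{p_0}$-theory via the embedding Property~\ref{Hyp-2-1}(B) and either Condition~\ref{Con-2-1} or~\ref{Con-2-2} of Property~\ref{Hyp-2-3}, at the expense of a worse base exponent $p > 1$, which still propagates through the induction to the claimed $\varepsilon^{-p^{k}}$ factor. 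The induction itself is essentially book-keeping for Young's inequality, with the single subtlety that one must ensure the parameter $\zeta$ chosen to absorb the interior $\|D^k u\|_{X_w}$-term remains small enough to keep all manipulations valid.
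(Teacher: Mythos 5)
Your argument is correct in substance, but its core step is genuinely different from the paper's. For the two--term inequality $\|D^k u\|_{X_w}\le \eta\|D^{k+1}u\|_{X_w}+c\eta^{-p}\|D^{k-1}u\|_{X_w}$ the paper does not mollify: it starts from the one--dimensional inequality of Gilbarg--Trudinger (Theorem 7.27) on intervals of length $\varepsilon$, partitions $\Omega$ into such disjoint intervals, and transfers between $X_w$ and $L^p$ using the embedding \eqref{eq-spaces} together with Condition~\ref{Con-2-1} or Condition~\ref{Con-2-2} and a H\"older estimate with $w$, $w^{-1}$ in the associate space; this is exactly why Property~\ref{Hyp-2-3} appears among the hypotheses, and it also explains why the base exponent there is some $p>1$ rather than $1$. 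Your route instead proves the pointwise bound $|D\tilde u(x)|\le c\delta^{-1}\cM\tilde u(x)+c\delta\,\cM(D^2\tilde u)(x)$ and concludes from boundedness of $\cM$ alone, which is cleaner, dimension-free, and dispenses with Property~\ref{Hyp-2-3} altogether --- a formally stronger result. The price is a point you should make explicit: after extending by Property~\ref{Hyp-2-2} and cutting off, $\cM\tilde u(x)$ for $x\in\Omega$ involves values of $\tilde u$ outside $\Omega$, so you need the maximal-operator bound of Property~\ref{Hyp-2-1}(A) on the enlarged domain (equivalently for the extended weight), not just on $X_w(\Omega)$ with extension by zero as literally stated; likewise you need the zero-order bound $\|\tilde u\|_{X_w(\Omega')}\le c\|u\|_{X_w(\Omega)}$, which is available because the extension operator is bounded for every $k=0,\dots,m$ (any extra derivative terms it produces are absorbed for small $\varepsilon$, as you note). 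Your iteration step, substituting the inductive bound for $\|u\|_{W^{k-1}_{X_w}}$ and absorbing the intermediate term, is essentially identical to the paper's recursion in $A_k=\|u\|_{W^k_{X_w}(\Omega)}$ with $\varepsilon_1=\varepsilon^{p+1}$, $\varepsilon_2=\varepsilon$; indeed your bookkeeping of the resulting exponent ($p+p^k\le p^{k+1}$ after enlarging $p$) is more careful than the paper's.
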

 \begin{proof} 
Without loss of generality we may  assume that $d_{\Omega } =1$ (see  \cite{9}).  Let $\Omega \Subset \Omega'$,  where
 $\Omega'$ is a    bounded domain with sufficiently smooth boundary.

In order to  simplify the calculus, we  begin assuming  \mbox{$n=1, m=2$} and then we  extend the result via induction to more general situation.
 Let $\Omega =(a,b)$ be  an interval of length $b-a=\varepsilon $ for a fixed  $\varepsilon$ and suppose that   $u\in C_0^2 (\Omega'),$ then by  
   \cite[Theorem~7.27]{GT} we have  
\begin{equation} \label{eq-5-9} 
|u'(x)|\le \int _{a}^{b}|u''(t)|\,dt+\frac{18}{\varepsilon^{2} } \int _{a}^{b}|u(t)|\,dt, \quad   \forall \, x\in (a,b).
\end{equation}
 By density arguments    it follows that  \eqref{eq-5-9}
holds true also for any   $ u\in \overset{\circ}{W}{}^{2,1} (\Omega').$   Since   $\overset{\circ}{W}{}_{X_w }^2  (\Omega')\subset\overset{\circ }{W}{}^{2,1} (\Omega')$, then it is evident that this inequality is true  for all  
$ u\in \overset{\circ}{W}{}_{X_w}^2  (\Omega').$ 

We can estimate the norm of $u'w$ by \eqref{eq-5-9}:
\begin{equation} \label{eq-5-10} 
\begin{split}
\| u'w\|_{X(a,b)} 
\leq      \| w\|_{X(a,b)}   \Big(\int_a^b |u''(t)|\,dt+\frac{18}{\varepsilon^2 } \int_a^b |u(t)|\,dt \Big).
\end{split}
\end{equation} 
Then for an  arbitrary interval $\Omega=(a,b)$ we construct covering   with  disjoint intervals $I_l$ with length $\varepsilon, $ such that  $
\Omega =\bigcup_l I_l.
$
Since  \eqref{eq-5-10} holds     for all  $ I_l ,$  the H\"older inequality for $p>1$ gives 
\begin{equation}\label{eq-we}
\begin{split}
\| u'w\| _{X(I_l )}^p \le & \|w\|_{X(I_l )}^p \Big[\Big( \int_{I_l }|u''(t)|\, dt\Big)^p +\frac{c}{\varepsilon^{2p} } \Big(\int_{I_l }|u(t)|\, dt\Big)^p  \Big] \\
\le& c \|w\|_{X(\Omega)}^p \Big(\varepsilon^{p-1}  \int _{I_l }|u''(t)|^p\,  dt+\frac{c }{\varepsilon^{p+1} } \int _{I_l }
|u(t)|^p\,  dt \Big),
\end{split}
\end{equation}
where we have used that for each $I_l\subset \Omega $ we have 
$  \| w\| _{X(I_l)}\leq \|w\|_{X(\Omega)}. 
$
   Then
$$
\sum_l \| u'w\| _{X(I_l )}^p  \le c\|w\|_{X(\Omega)}^p  \Big(\varepsilon^{p-1 } \int _{\Omega }|u''(x)|^p\,  dx+\frac{c}{\varepsilon^{p+1} } 
\int_{\Omega }|u(x)|^p\,  dx\Big).
$$ 

Let  the Condition~\ref{Con-2-1} holds. Then for each   $p\in[1,p_1]$ we have 
$$
\| u'w\|_{X(\Omega )}^p \leq c \sum_l \| u'w\|_{X(I_l )}^p .
$$
For any  $p\leq \min\{p_0,p_1\},$   the  Property~\ref{Hyp-2-1} implies   $X_w (\Omega )\subset L^p (\Omega )$  and  hence
\begin{equation}\label{eq-uw}
\begin{split}
\| u'w\|_{X(\Omega )} & \le c\|w\|_{X(\Omega)}  \Big(\varepsilon^{p-1} \int_\Omega |u''(x)|^p\, dx +\frac{c}{\varepsilon^{p+1}}  
\int_{\Omega }|u(x)|^p\, dx \Big)^{\frac{1}{p} }\\
&  \le c\|w\|_{X(\Omega)} \Big( \varepsilon \| u''\|_{L^p (\Omega )} +\frac{c}{\varepsilon^{\frac{p+1}{p-1}}}
\| u\|_{L^p (\Omega )} \Big).
\end{split}
\end{equation} 

If   $x\in \R^n$ then \eqref{eq-uw} holds for any partial derivative $D_iu$ for $i=1,\ldots,n.$ Moreover it holds  also for the higher order derivatives, that is 
$$
\| D^k u\|_{X_w (\Omega )} \le \|w\|_{X(\Omega)}   \Big( \varepsilon^{\frac{1}{p'} }\| D^{k+1 } u\|_{X_w (\Omega)} +\frac{c}{\varepsilon^{2-\frac{1}{p'} }}
 \| D^{k-1 } u\|_{X_w (\Omega )} \Big),
$$ 
with $ 1\leq k\leq m-1.$ Summing up along $k$   we obtain  
$$
\| u\| _{W_{X_w (\Omega )}^s } \le \|w\|_{X_w} \Big(\epsilon\| u\|_{W_{X_w (\Omega )}^{s+1} } +\frac{c}{\epsilon^{2-\frac1{p'}} } \| u\|_{W_{X_w }^{s-1} (\Omega )} \Big)
$$ 
for all $s=1,\ldots,m-1.$

Consider  now the case when Condition~\ref{Con-2-2} holds, hence there exists $p_2\in(1,\infty) $ such that   
$$
\| u'\|_{X_w (\Omega )} \le c\| u'\|_{L^{p} (\Omega )}\qquad \forall \ p\in [p_2,\infty).
$$ 
As before, it is sufficient to consider the case of $n=1,$ $m=2$ and $b-a=\varepsilon.$ 
Taking into account  \eqref{eq-5-9} and \cite[Lemma 7.27]{GT} (see also \cite{APS,26}) we have 
\begin{equation} \label{eq-5-11} 
\begin{split}
\| u'\|_{X_w (a,b)}  & \leq    c\Big( \int_a^b |u'(x)|^p\, dx\Big)^{\frac1p}\leq 
c \varepsilon\int_a^b |u''(t)|\, dt+\frac{c}{\varepsilon^{2p}} \int_a^b |u(t)|\, dt
\end{split}
\end{equation}

Taking an  arbitrary interval $\Omega =(a,b)$ we  construct  a covering with a family of disjoint intervals of length  $\varepsilon,$ that is,     $\Omega =\bigcup_{l}I_l  $,    where $\Omega \bigcap I_l \ne \emptyset,   $    $I_{i} \bigcap I_{j} =\emptyset ,$  $i\ne j$.  Since  \eqref{eq-5-11} holds for all $I_l,$   summing up these estimates   with respect to $l$   we obtain 
\begin{equation*}\label{eq-5-11a}
\begin{split}
\|u'&\|_{X_w(\Omega)} \le \varepsilon \|u''\|_{L^p(\Omega)} 
+\frac{c}{\varepsilon } \|u\|_{L^p(\Omega)}\\ 
& \ \le \epsilon \int_{\Omega }|u''(x)|w(x)w^{-1}(x) \,dx
 +\frac{c}{\epsilon^{2p_2 } } 
\int_{\Omega }|u(x)|w(x)w^{-1}(x)\, dx \Big)\\
& \  \le \| w^{-1} \|_{X'(\Omega )} \Big(\epsilon\| u''\|_{X_w (\Omega )}
 +\frac{c}{\epsilon^{2p_2 } } \| u\|_{X_w (\Omega)} \int_a^b |u''(t)|\, dt +\frac{c}{\varepsilon^2}\int_a^b |u(t)\,dt|\Big)
\end{split}
\end{equation*}
 Extending this  to functions of $n $ variables  and taking  $\varepsilon =  \| w^{-1} \|_{X'(\Omega )} \epsilon,$ we obtain
\begin{equation}\label{eq-5-11b}
\| u\|_{W_{X_w (\Omega )}^k } \le \varepsilon\| u\|_{W_{X_w(\Omega )}^{k+1} } +\frac{c}{\varepsilon^p } \| u\|_{W_{X_w }^{k-1} (\Omega )}, \quad    \forall  \ k=1,\ldots, m-1, 
\end{equation} 
for any $p\geq 2p_2$ and a  constant $c$ depending on known quantities  and on the norms  $\|w^{-1}\|_{X'(\Omega)}$ and 
$\|w\|_{X_w(\Omega)}.$

Introducing the notion 
 $A_k :=\| u\|_{W_{X_w (\Omega )}^{k} } ,$ for all $ k=0,\ldots,m,$  we rewrite \eqref{eq-5-11b}  
\begin{align*}
A_{1} &\le \varepsilon_1 A_2 +\frac{c_1}{\varepsilon_1^p } A_{0}\\
A_2 &   \le  \varepsilon_2 A_3 +\frac{c_2}{\varepsilon_2^p } A_1  \leq
\varepsilon_2 A_3 +\frac{c_1c_2}{(\varepsilon_2 \varepsilon_1)^p } A_{0} +
\frac{c_2 \varepsilon_1 }{\varepsilon_2^p } A_2 .
\end{align*}
Taking  $\varepsilon_2=\varepsilon, \varepsilon_1= \varepsilon^{p+1}$ with $\varepsilon c_2<1,$  we get
$$
A_{2} \le \varepsilon A_{3} +\frac{1}{\varepsilon^{p^2} }  A_{0} .
$$
Repeating the same procedure we obtain a kind of  interpolation inequality
$$
A_{k} \le \varepsilon A_{k+1} +\frac{1}{\varepsilon ^{p^{k} } } A_{0}  ,\qquad  \forall\,  k=1,\ldots,m-1
$$ 
that is the assertion of the lemma.
\end{proof}

\begin{theorem}[Interior Schauder type estimate]\label{thm-5-1} 
Consider the  uniformly elliptic  equation $\cL u=f$  in a bounded domain $\Omega$  with  principal coefficients
$a_{\alpha}\in C(\overline{\Omega }),$ for all $|\alpha|=m$ and $ a_{\alpha}\in L^\infty(\Omega) $ for $|\alpha|<m.$ 
Suppose that   the Properties~\ref{Hyp-2-1} and~\ref{Hyp-2-2} hold.  Then for any domain $\Omega_0 \Subset \Omega $ the  following a priori estimate holds
\begin{equation}\label{eq-th5.3}
\| u\|_{W_{X_w }^m (\Omega_0)} \le C (\| \cL u\|_{X_w (\Omega )} +\| u\|_{X_w (\Omega)} ).
\end{equation}
\end{theorem}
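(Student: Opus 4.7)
The plan is to combine the local ball estimate from Lemma~\ref{lem-5-2} with the interpolation inequality from Lemma~\ref{lem-5-3} via a finite covering followed by an absorption iteration.

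First, I cover $\overline{\Omega_0}$ by finitely many balls $B_{r_1}(x_i)$ with concentric balls $B_{r_2}(x_i)\Subset\Omega$, where $r_1<r_2$ are chosen small enough that the Main Lemma~\ref{MainLemma} applies at every $x_i$. Using the subadditivity (P1) and monotonicity (P2) of the BFS norm, together with the bounded overlap of the cover (since $\Omega_0$ is bounded), summing the local estimates from Lemma~\ref{lem-5-2} yields
$$\|u\|_{W^m_{X_w}(\Omega_0)}\le C_1\bigl(\|\cL u\|_{X_w(\Omega)}+\|u\|_{W^{m-1}_{X_w}(\Omega')}\bigr)$$
for some intermediate open set $\Omega_0\Subset\Omega'\Subset\Omega$, with $C_1$ depending on $\dist(\Omega_0,\partial\Omega)$ and on the coefficients of $\cL$.

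Second, I apply Lemma~\ref{lem-5-3} with $k=m-1$ on $\Omega'$ to interpolate
$$\|u\|_{W^{m-1}_{X_w}(\Omega')}\le \varepsilon\,\|u\|_{W^m_{X_w}(\Omega')}+\frac{c}{\varepsilon^{p^{m-1}}}\|u\|_{X_w(\Omega')}$$
for arbitrary $\varepsilon>0$. Substituting into the previous bound produces a $\|u\|_{W^m_{X_w}(\Omega')}$ term on the right, which lives on a strictly larger domain than the left-hand side and therefore cannot be absorbed directly.

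To resolve this, I would run the argument along a nested family of eroded subdomains $\Omega_\rho=\{x\in\Omega:\dist(x,\partial\Omega)>\rho\}$, applying steps one and two with pairs $(\Omega_{\rho_1},\Omega_{\rho_2})$ in place of $(\Omega_0,\Omega')$. This produces a monotone functional inequality of the form
$$f(\rho_1)\le \theta\, f(\rho_2)+A+B(\rho_2-\rho_1)^{-m},\qquad \rho_1<\rho_2,$$
where $f(\rho):=\|u\|_{W^m_{X_w}(\Omega_\rho)}$, $\theta=C\varepsilon<1$, and $A,B$ are linear combinations of $\|\cL u\|_{X_w(\Omega)}$ and $\|u\|_{X_w(\Omega)}$. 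A standard Campanato-type iteration lemma then gives $f(\rho_0)\le C\bigl(A+B(\rho^*-\rho_0)^{-m}\bigr)$, which, chosen so that $\Omega_0\Subset\Omega_{\rho_0}$, is the desired Schauder estimate \eqref{eq-th5.3}.

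I expect the main obstacle to be precisely this absorption step: Lemma~\ref{lem-5-3} carries the potentially large exponent $p^{m-1}$ on $\varepsilon$, so $\varepsilon$ must be chosen small enough to enforce $\theta<1$ while keeping the factor $\varepsilon^{-p^{m-1}}$ manageable. Casting the combined inequality as a monotone estimate on the scalar function $f(\rho)$ and invoking the standard iteration lemma is the cleanest way to avoid keeping track of the constants through the iteration. The final constant in \eqref{eq-th5.3} will depend on $\Omega_0,\Omega$, the ellipticity constants $\lambda,\Lambda$, the coefficients of $\cL$, and the embedding/operator constants from Properties~\ref{Hyp-2-1} and~\ref{Hyp-2-3}.
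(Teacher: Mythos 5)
Your proposal is correct in substance and rests on exactly the same two pillars as the paper's proof — the local ball estimate of Lemma~\ref{lem-5-2} and the interpolation inequality of Lemma~\ref{lem-5-3} — but it implements the final absorption differently. The paper first reduces to a pair of concentric balls $\B_r\subset\B_R$ by a finite cover, then absorbs by the weighted-supremum device: it sets $A=\sup_{0\le r\le R}(1-r/R)^{m^2}\|u\|_{W^m_{X_w;r}(r)}$, picks $r_1$ nearly attaining the sup, applies Lemmata~\ref{lem-5-2} and~\ref{lem-5-3} with an intermediate radius $r_2$, and chooses $\varepsilon\sim\delta^{2m}$ (with $\delta=1-r_1/R$, $1-r_2/R=\delta/2$) so that the coefficient of $A$ on the right is below $1/2$. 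You instead phrase the absorption as a monotone functional inequality on eroded subdomains $\Omega_\rho$ and invoke the standard Campanato--Giaquinta iteration lemma; this is an equally standard and perfectly workable device, and arguably cleaner bookkeeping. One quantitative point in your write-up should be repaired: with $\varepsilon$ fixed, the factor multiplying $f(\rho_2)$ is $C(\rho_2-\rho_1)^{-m}\varepsilon$ (the cutoff constant from Lemma~\ref{lem-5-2}, plus the covering multiplicity), so it cannot be a gap-independent $\theta<1$; you must take $\varepsilon$ proportional to a power of $\rho_2-\rho_1$, which then inflates the last term to $B(\rho_2-\rho_1)^{-\alpha}$ with $\alpha$ of order $m\,p^{m-1}$ (plus the covering contribution) rather than $m$. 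This is harmless, since the iteration lemma is insensitive to the exponent, and it is the exact analogue of the paper's choice $\varepsilon\sim\delta^{2m}$ — but as stated your inequality with constant $\theta=C\varepsilon$ and exponent $m$ is not what the two lemmata give. Two further points you share with the paper and should make explicit: the argument needs $f(\rho)=\|u\|_{W^m_{X_w}(\Omega_\rho)}$ finite (i.e.\ $u\in W^m_{X_w}$ locally), just as the paper needs $A<\infty$; and Lemma~\ref{lem-5-3} uses Property~\ref{Hyp-2-3}, so that hypothesis enters the constant even though the theorem's statement lists only Properties~\ref{Hyp-2-1} and~\ref{Hyp-2-2}.
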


\begin{proof}  We can construct a finite cover of  $\Omega_0$ with  balls whose closure belong to $\Omega.$ That is way, without loss of generality we can  consider  $\Omega $ and $\Omega _{0} $ to be  concentric balls of small radius centered at  zero.  Let $R>0$  be sufficiently small, then we need  to prove the following estimate
$$
\| u\|_{W_{X_w ;r}^{m} (r)} \le C \Big(1-\frac{r}{R} \Big)^{-m^2 } (\| \cL u\|_{X_w (R)} +\| u\|_{X_w (R)} ).
$$ 
 Set
$$
A=\sup_{0\le r\le R}  \Big(1-\frac{r}{R} \Big)^{m^{2} } \| u\|_{W_{X_w ;r}^m (r)}.
$$ 
 Suppose that $\| u\|_{W_{X_w ;R}^m (R)} \ne 0$, otherwise, there is nothing to prove. It is easy to see that there  exists  $ r_{1} \in (0,R)$, such that 
$$
A\le 2\Big(1-\frac{r_1 }{R} \Big)^{m^{2} } \| u\|_{W_{X_w ;r_{1} }^{m} (r_{1} )} .
$$ 
For a fixed $r_{2} \in (r_{1} ,R)$ and  Lemmata~\ref{lem-5-2}  and~\ref{lem-5-3} we obtain 
\begin{align*}
A\le & \, c \Big(1-\frac{r_{1} }{R} \Big)^{m^2 } \Big(1-\frac{r_{1} }{r_{2} } \Big)^{-m} (\|\cL u\|_{X_w(r_2)} +
\| u\| _{W_{X_w ;r_2 }^{m-1}(r_2 )} )\\
\le & \, c \Big(1-\frac{r_{1} }{R} \Big)^{m^{2} } \Big(1-\frac{r_{1} }{r_{2} } \Big)^{-m} \Big(\|\cL u\|_{X_w (r_{2} )} + 
 \varepsilon \| u\|_{W_{X_w ;r_2 }^m (r_2 )}  +\frac{c}{\varepsilon^{p^{m-1} } } \| u\|_{X_w (r_2 )}\Big ).
\end{align*} 
Keeping in mind  that 
$
\Big(1-\frac{r_{2} }{R} \Big)^{m^{2} }\| u\|_{W_{X_w ;r_2 }^{m} (r_2)} \le A,
$
we obtain
\begin{equation}  \label{eq-5-12} 
\begin{split}
A\le  c&\Big(1-\frac{r_1 }{R} \Big)^{m^2 } \Big(1-\frac{r_1 }{r_2 } \Big)^{-m}\| \cL u\|_{X_w (r_2)}\\
 &+ \varepsilon c \Big(1-\frac{r_1 }{R} \Big)^{m^2 } \Big(1-\frac{r_1 }{r_2 } \Big)^{-m} \Big(1-\frac{r_2 }{R} \Big)^{-m^2 }A\\
& + \frac{c}{\varepsilon^{p^{m-1}}} \Big(1-\frac{r_1 }{R} \Big)^{m^2}  \Big(1-\frac{r_1 }{r_2 } \Big)^{-m} \| u\|_{X_w (r_2)} .     
\end{split}                               
\end{equation} 
 Set $\delta =1-\frac{r_1}{R}$ and choose $r_2$ and $\varepsilon $ in the following way
$$
1-\frac{r_2}{R}=\frac{\delta}{2}, \quad \varepsilon= 2^{-1-4m^2}\delta^{2m}c^{-1}.
$$
This implies 
$$
0<\delta<1, \quad \frac{\delta}2<1-\frac{r_1}{r_2}<\delta,\quad 0<\varepsilon <1
$$
and, hence  
$$
\varepsilon c\Big(1-\frac{r_{1} }{R} \Big)^{m^{2} } \Big(1-\frac{r_{1} }{r_{2} } \Big)^{-m} \Big(1-\frac{r_{2} }{R}\Big)^{-m^{2} } <\frac{1}{2} .
$$ 
Making use of  \eqref{eq-5-12} we obtain the estimate 
$$
A< c \big(\| \cL u\|_{X_w (r_2 )} +\| u\|_{X_w (r_2)} )\le c\Big(\| \cL u\|_{X_w (R)} +\| u\|_{X_w (R)}\Big),
$$
that gives   \eqref{eq-th5.3}.
\end{proof}

\section{Some examples  of  Banach Function Spaces}

 Let  $X(\Omega )$ be  correctly defined for any domain  $\Omega \subset \R^{n} $ with a sufficiently smooth boundary   $\partial \Omega, $ and suppose that  $\Omega _0$ is a bounded domain such that  
 $\Omega \subset \Omega_0\subset \R^{n} .$

\begin{lemma}\label{lem-6-1}
Let  $ \theta_k \in [W_{X_w }^k (\Omega ); W_{X_w }^{k} (\Omega _0)]$ be such that $ \theta_k f\big\vert_{\Omega } =f $ for all $ k=0,\ldots,m.$  Then there exists  $ \theta \in [W_{X_w }^{k} (\Omega );W_{X_w }^{k} (\Omega _0)] ,$ such that $ \theta f\big\vert_{\Omega } =f, $ for all $k=0,...,m$. 
\end{lemma}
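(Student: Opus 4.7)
My plan is to construct a single operator $\theta$ by starting from the highest-order given extension $\theta_m$, compactifying it with a cut-off, and then upgrading the construction by density and comparison so that it operates at every lower level simultaneously.

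First I would fix a cut-off $\eta\in C_0^\infty(\Omega_0)$ with $\eta\equiv 1$ on $\overline{\Omega}$ and provisionally set $\theta f:=\eta\cdot\theta_m f$ for $f\in W_{X_w}^m(\Omega)$. By Leibniz's rule and the hypothesis $\theta_m\in[W_{X_w}^m(\Omega);W_{X_w}^m(\Omega_0)]$, $\theta$ is a bounded operator between those spaces and $\theta f\big|_\Omega=f$. The multiplication by $\eta$ ensures $\theta f$ vanishes near $\partial\Omega_0$, which will help in the localization step.

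The heart of the argument is then to upgrade $\theta$ to a bounded operator on every $W_{X_w}^k(\Omega)$ for $k=0,\ldots,m-1$ simultaneously. The natural route is to compare $\theta_m f$ with $\theta_k f$ for $f\in W_{X_w}^m(\Omega)$: since both restrict to $f$ on $\Omega$, the difference $\rho_k f:=\theta_m f-\theta_k f$ vanishes on $\Omega$ and is supported in $\Omega_0\setminus\overline{\Omega}$. Splitting
$$\|\theta f\|_{W_{X_w}^k(\Omega_0)}\le\|\eta\,\theta_k f\|_{W_{X_w}^k(\Omega_0)}+\|\eta\,\rho_k f\|_{W_{X_w}^k(\Omega_0)},$$
the first term is controlled by $c\|f\|_{W_{X_w}^k(\Omega)}$ via the hypothesis on $\theta_k$. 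Using the density of $W_{X_w}^m(\Omega)\cap C^\infty(\overline{\Omega})$ in each $W_{X_w}^k(\Omega)$ (which follows from the BFS framework thanks to Property~\ref{Hyp-2-1}, particularly boundedness of $\cM$ for mollification), one then extends $\theta$ by continuity to all of $W_{X_w}^k(\Omega)$, obtaining one and the same operator.

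The main obstacle is the uniform control of the correction $\eta\,\rho_k f$ in the $W_{X_w}^k$-norm, because neither hypothesis on $\theta_m$ alone nor on $\theta_k$ alone supplies this joint estimate. The key is to exploit the support property $\rho_k f\equiv 0$ on $\Omega$ together with the localization by $\eta$, so that all derivatives of $\eta\,\rho_k f$ up to order $k$ can be distributed via Leibniz and each resulting piece bounded using whichever of the two hypotheses is appropriate at that regularity level. Once these estimates are assembled for every $k$, the density extension yields a single $\theta\in[W_{X_w}^k(\Omega);W_{X_w}^k(\Omega_0)]$ simultaneously for all $k=0,\ldots,m$, which is exactly the content of the lemma.
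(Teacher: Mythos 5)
Your construction stalls exactly at the point you yourself flag as the main obstacle, and that obstacle is not surmountable with the tools you invoke. To extend $\theta f=\eta\,\theta_m f$ by continuity from $W^m_{X_w}(\Omega)$ to $W^k_{X_w}(\Omega)$ you need the uniform bound $\|\eta\,\theta_m f\|_{W^k_{X_w}(\Omega_0)}\le c\,\|f\|_{W^k_{X_w}(\Omega)}$ for $f\in W^m_{X_w}(\Omega)$, and in your splitting all of the difficulty sits in the term $\|\eta\,\rho_k f\|_{W^k_{X_w}(\Omega_0)}$ with $\rho_k f=\theta_m f-\theta_k f$. The only information the hypotheses give about $\theta_m f$ on $\Omega_0\setminus\overline{\Omega}$ is $\|\theta_m f\|_{W^m_{X_w}(\Omega_0)}\le c\|f\|_{W^m_{X_w}(\Omega)}$, i.e. a bound by the $W^m$-norm of $f$, not by its $W^k$-norm. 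The facts that $\rho_k f$ vanishes on $\Omega$ and that derivatives of $\eta$ can be redistributed by Leibniz do not convert a $W^m$-bound into a $W^k$-bound: nothing in the hypotheses says the top-level extension $\theta_m$ is bounded in any lower norm, and that is precisely what has to be proved. So the key inequality is asserted, not established. A second, independent problem is the density step: the BFS covered by the paper include non-separable examples (Marcinkiewicz, grand Lebesgue, Morrey-type spaces), in which mollifications do not converge in norm and smooth (or $W^m_{X_w}$) functions are not dense in $W^k_{X_w}(\Omega)$; boundedness of $\cM$ yields uniform bounds for mollifiers, not norm convergence, so ``extend by continuity'' cannot reach all of $W^k_{X_w}(\Omega)$.

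The paper takes a genuinely different route that avoids both issues. It defines $\theta$ piecewise, letting it act as $\theta_j$ on $W^j_{X_w}(\Omega)\setminus W^{j+1}_{X_w}(\Omega)$, so no density is needed; the substantive point is then the compatibility estimate that the top-level operator is bounded in the lower-order norm, and this is obtained (in the model case $m=1$, one space dimension) by applying the $W^1$-boundedness of $\theta_1$ to a primitive $F(x)=\int_a^x\theta_1 f(t)\,dt$, so that the $X_w$-norm of $\theta_1 f$ is recovered from the $W^1$-norm of $F$, which is controlled by $\|f\|_{X_w(\Omega)}$; the general case follows by induction on $m$ and by treating one variable at a time. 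If you want to salvage your cut-off approach you must either prove an estimate of the type $\|\theta_m f\|_{W^k_{X_w}(\Omega_0)}\le c\|f\|_{W^k_{X_w}(\Omega)}$ directly (which is essentially the content of the lemma) or import a mechanism, such as the primitive trick above, that manufactures the lower-order bound from the higher-order hypothesis.
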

\begin{proof}
Consider the following operator
$$
\theta  f=
\begin{cases} 
\theta _{m} f & f\in W_{X_w}^m (\Omega) , \\
\theta_{m-1} f  & f\in W_{X_w }^{m-1} (\Omega )\backslash W_{X_w}^m (\Omega ) , \\
\ldots &\ldots\ldots\ldots\ldots \\
\theta_1 f  & f\in W_{X_w }^1 (\Omega )\backslash W_{X_w }^{2} (\Omega ) , \\
\theta_{0} f &  f\in  X_w (\Omega )\backslash W_{X_w }^{1} (\Omega) .
 \end{cases}
$$

  From the chain of embedding  
$$
X_w (\Omega )\supset W_{X_w }^1 (\Omega )\supset \ldots \supset W_{X_w }^m (\Omega ),
$$
it  follows that the linear operator $\theta \in [X_w  (\Omega )]$ is well defined.

 Consider the case $m=1,$  and extend the proof by induction to $m>1.$ For simplicity we carry out the proof in the one dimensional case.
 Supposing 
$$
\theta \in [W_{X_w }^{1} (\Omega ); W_{X_w }^{1} (\Omega _0)] \quad \text{ and } \quad \theta  f\big\vert_{\Omega } =f,
$$ 
we have to show that $\theta \in[X_w (\Omega ); X_w (\Omega _0)].$  Indeed, if 
$f\in  X_w (\Omega ) \setminus  W_{X_w }^1 (\Omega ),$  then
 $\theta  f=\theta_0 f$ and therefore 
$$
\| \theta  f\|_{X_w (\Omega _0)} \le \| \theta_0 \| \| f\|_{X_w (\Omega)}.
$$
 
 Let $f\in W_{X_w }^{1}(\Omega ),$ hence $\theta  f=\theta _1 f $. 
Consider the function
$$
F(x)=\int_{a}^{x}\theta _{1} f(t)\,dt  \qquad\text{ for a.a. } \  x\in \Omega _0,
$$
 where $a\in \Omega $ is some fixed point.
 Then 
$
F\in W_{X_w}^{1}  (\Omega _0),$      $ \frac{dF(x)}{dx} =\theta _{1} f(x),$ for a.a. $ x \in \Omega_0$,  and 
 $\theta F=\theta _{1} F$ that implies  $F\in W_{X_w }^{1} (\Omega)$  and  
\begin{equation} \label{eq6-1} 
\| \theta_1  F\|_{W_{X_w}^1 (\Omega _0)} \le C_1 \| F\|_{W_{X_w }^{1} (\Omega )} ,                                       
\end{equation} 
where $C_1$ is a constant, independent of  $F.$  It is obvious that $\frac{dF(x)}{dx}=f(x),$  a.e. in $\Omega. $  Moreover  
\begin{align*}
\| \theta _{1} f\| _{X_w  (\Omega _0)}& \leq \| \theta _{1} F\|_{X_w (\Omega _0)} +\| \theta _{1} f\| _{X_w  (\Omega _0)}\\
&\leq  C_1 \Big(\| f\|_{X_w  (\Omega )} +\Big\| \int _{a}^{x}f(t)\, dt \Big\|_{X_w (\Omega )} \Big)\le C\| f\|_{X_w  (\Omega )}.
\end{align*}

Considering functions of $n$ variables, fixing  all variables except one  and applying the above procedure we can extend this result in the $n$-dimensional case. 
\end{proof}

\subsection{The Marcinkiewicz spaces }

{  
 The  Banach space   $M^{p,\lambda}(\Omega), p\in(1,\infty), \lambda \in(0,1),$ consisting of all 
 functions  $f\in L^p_\loc(\Omega)$  
 for which  }
$$
\| f\|_{M^{p,\lambda}(\Omega) } =\sup_{E\subset \Omega } \Big(\frac{1}{|E|^{\lambda }} \int_{E}|f(x)|^{p} \,dx \Big)^{\frac{1}{p} }  <\infty
$$
{is  called {\it the Marcinkiewicz space} (see for instance  \cite{38}).} 

{Let us note that in the definition of 
  {\it  the Morrey spaces}  $L^{p,\lambda } (\Omega)$  the supremum is taken over all intersections  $E=\B\cap \Omega $ where $\B$ are balls in $\R^n.$  This   gives    
  the continuous embedding $M^{p,\lambda} (\Omega)\subset L^{p,\lambda } (\Omega)$   for all $\lambda\in (0,1).$ }
  
It is easy to see that   $M^{p,\lambda} (\Omega)$  is nonseparable,  
 moreover, it is a rearrangement invariant space (cf. \cite{40}). 
 
 Let $T$ be a quasi-linear operator  of
 $(p,p;q,q)$-compatible weak  type, (see \cite{2} for the definition).  Then we have the following results (cf. \cite{2,40}).
\begin{lemma}\label{lem-6-4}
{The Boyd indices  of the Marcinkiewicz space $M^{p,\lambda } (\Omega) $   are 
$$
\alpha_{X} =\beta_{X} =\frac{1-\lambda }{p}, \qquad p\in(1,\infty),  \  \lambda\in (0,1).
$$}
\end{lemma}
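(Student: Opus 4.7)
The plan is to put $M^{p,\lambda}(\Omega)$ into rearrangement-invariant form, compute the norm of the dilation operator $E_{s}$ explicitly, and read off the Boyd indices from the resulting power.

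First I would rewrite the norm through the non-increasing rearrangement $f^{*}$. The Hardy--Littlewood inequality gives $\int_{E}|f(x)|^{p}\,dx\le\int_{0}^{t}(f^{*}(s))^{p}\,ds$ for every measurable $E\subset\Omega$ with $|E|=t$, and equality is attained on the superlevel set $\{|f|>f^{*}(t)\}$. Taking the supremum over $E$ therefore yields the representation
\begin{equation*}
\|f\|_{M^{p,\lambda}(\Omega)}=\sup_{0<t\le|\Omega|}t^{-\lambda/p}\Bigl(\int_{0}^{t}(f^{*}(s))^{p}\,ds\Bigr)^{1/p},
\end{equation*}
which confirms that $M^{p,\lambda}(\Omega)$ is rearrangement invariant and identifies its representation norm on $(0,|\Omega|)$ (with $f^{*}$ extended by zero beyond $|\Omega|$).

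Second, I would apply the dilation operator, which on the representation space acts by $(E_{s}f)^{*}(t)=f^{*}(t/s)$. A change of variable $v=u/s$ inside the integral, followed by $r=t/s$ in the outer supremum, gives
\begin{equation*}
\|E_{s}f\|_{X}=\sup_{r}(rs)^{-\lambda/p}\Bigl(s\int_{0}^{r}(f^{*}(v))^{p}\,dv\Bigr)^{1/p}=s^{(1-\lambda)/p}\|f\|_{X},
\end{equation*}
so $h_{X}(s):=\|E_{s}\|_{[X]}=s^{(1-\lambda)/p}$ for every admissible $s>0$.

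Finally, since the Boyd indices are
\begin{equation*}
\alpha_{X}=\lim_{s\to 0^{+}}\frac{\log h_{X}(s)}{\log s},\qquad \beta_{X}=\lim_{s\to\infty}\frac{\log h_{X}(s)}{\log s},
\end{equation*}
and $\log h_{X}(s)/\log s\equiv(1-\lambda)/p$ is constant in $s$, both limits equal $(1-\lambda)/p$. The only non-mechanical point is the first step: producing the rearrangement-invariant representation, which requires the classical Hardy--Littlewood rearrangement lemma together with some care about attainment and about extending $f^{*}$ beyond the bounded interval $(0,|\Omega|)$ so that the dilation $E_{s}$ is well-posed. Once the representation is in place, the remaining computation is a pure substitution.
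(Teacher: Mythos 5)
The paper itself offers no proof of this lemma --- it is quoted with a citation to Bennett--Sharpley \cite{2} and the authors' earlier work \cite{40}, so there is no in-text argument to compare against. Your direct computation is sound and is the natural way to obtain the indices: the Hardy--Littlewood rearrangement inequality, applied to $|f|^p$, gives the representation $\|f\|_{M^{p,\lambda}}=\sup_{0<t\le|\Omega|}t^{-\lambda/p}\bigl(\int_0^t(f^*)^p\bigr)^{1/p}$, which at once shows the space is rearrangement invariant, and a pure power dilation function forces both Boyd indices to equal that exponent. The one step that deserves tightening is the one you already flag as ``care about attainment'': after the substitution $r=t/s$ the outer supremum runs over $0<r\le|\Omega|/s$, not over the full interval $(0,|\Omega|]$. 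For $s<1$ this is harmless, since with $f^*$ extended by zero the quantity $r^{-\lambda/p}\bigl(\int_0^r(f^*)^p\bigr)^{1/p}$ is non-increasing for $r>|\Omega|$, so the two suprema coincide. For $s>1$ the range genuinely shrinks and the displayed identity $\|E_sf\|_X=s^{(1-\lambda)/p}\|f\|_X$ should read $\le$; the exact value $h_X(s)=s^{(1-\lambda)/p}$ is recovered by testing on $f=\chi_{(0,a)}$ with $sa<|\Omega|$, where the ratio is attained. This refinement does not change the conclusion $\alpha_X=\beta_X=(1-\lambda)/p$, which is correct and consistent with the fundamental function $\varphi_X(t)=t^{(1-\lambda)/p}$ one reads off from characteristic functions.
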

\begin{theorem}\label{thm-6-9}
  Let   $X$ be a rearrangement invariant space on an nonatomic complete $\sigma $-ﬁnite space with infinite  measure. Then any linear (quasilinear) operator  $T$ of  $(p,p;q,q)$-compatible weak type  is bounded in $X$; i.e.,  $T\in X$ if and only if the Boyd indices  $\alpha _{X} $ and $\beta _{X} $ satisfy the inequality $$
\frac{1}{q} <\alpha _{X} \le \beta _{X} <\frac{1}{p} , \qquad 1\leq p<q\leq +\infty.
$$
\end{theorem}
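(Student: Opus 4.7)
The plan is to follow the classical Boyd interpolation framework, specialized to the r.i.\ space setting described in the statement. The key mediator is the \emph{Calder\'on operator} $S_{p,q}$ acting on decreasing rearrangements: I would first translate the two weak-type assumptions into a single pointwise estimate on $(Tf)^{*}$, and then pass from the boundedness of $S_{p,q}$ on the representation space $\overline X$ on $(0,\infty)$ to conditions on the Boyd indices $\alpha_X,\beta_X$.

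Concretely, the first step is to exploit $(p,p;q,q)$-compatible weak type. By the Calder\'on-O'Neil rearrangement lemma, any quasilinear operator $T$ of simultaneous weak types $(p,p)$ and $(q,q)$ satisfies, for all $t>0$,
\begin{equation*}
(Tf)^{*}(t)\le C\Bigl(t^{-1/p}\int_{0}^{t}s^{1/p}f^{*}(s)\,\frac{ds}{s}+t^{-1/q}\int_{t}^{\infty}s^{1/q}f^{*}(s)\,\frac{ds}{s}\Bigr)=:C\,(S_{p,q}f^{*})(t),
\end{equation*}
where the second integral is interpreted in the obvious way when $q=+\infty$. Since $X$ is rearrangement invariant on the underlying nonatomic $\sigma$-finite space of infinite measure, the Luxemburg representation theorem yields an r.i.\ norm $\|\cdot\|_{\overline X}$ on $(0,\infty)$ with $\|g\|_{X}=\|g^{*}\|_{\overline X}$; combined with the Hardy-Littlewood-P\'olya principle applied to the pointwise bound above, one gets $\|Tf\|_{X}\le C\|S_{p,q}f^{*}\|_{\overline X}\le C\|S_{p,q}\|_{[\overline X]}\,\|f\|_{X}$. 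Thus the problem reduces to the boundedness of $S_{p,q}$ on $\overline X$.

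The main technical step, and the principal obstacle, is the characterization of when $S_{p,q}$ is bounded on $\overline X$ in terms of the Boyd indices. Here I would use the dilation operator $(E_{s}g)(t)=g(t/s)$ and its norm $h_{X}(s)=\|E_{s}\|_{[\overline X]}$, recalling that by definition
\begin{equation*}
\alpha_{X}=\sup_{0<s<1}\frac{\log h_{X}(s)}{\log s}=\lim_{s\to 0^{+}}\frac{\log h_{X}(s)}{\log s},\qquad \beta_{X}=\inf_{s>1}\frac{\log h_{X}(s)}{\log s}=\lim_{s\to\infty}\frac{\log h_{X}(s)}{\log s}.
\end{equation*}
After the substitution $s=tu$, the two pieces of $S_{p,q}$ become, respectively, $\int_{0}^{1}u^{1/p-1}(E_{1/u}f^{*})(t)\,du$ and $\int_{1}^{\infty}u^{1/q-1}(E_{1/u}f^{*})(t)\,du$; taking $\|\cdot\|_{\overline X}$ under the integral (via Minkowski's integral inequality, legitimate because $\overline X$ is a Banach function norm) yields
\begin{equation*}
\|S_{p,q}f^{*}\|_{\overline X}\le \Bigl(\int_{0}^{1}u^{1/p-1}h_{X}(1/u)\,du+\int_{1}^{\infty}u^{1/q-1}h_{X}(1/u)\,du\Bigr)\|f^{*}\|_{\overline X}.
\end{equation*}
The first integral converges precisely when the exponential growth rate of $h_{X}(1/u)$ near $u=0$ is smaller than $1/p$, i.e.\ $\beta_{X}<1/p$; the second converges precisely when the growth rate near $u=\infty$ lies below $1/q$, i.e.\ $\alpha_{X}>1/q$. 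This is where the submultiplicativity $h_{X}(s_{1}s_{2})\le h_{X}(s_{1})h_{X}(s_{2})$ and the resulting power-type bounds $h_{X}(s)\le C_{\varepsilon}s^{\alpha_{X}-\varepsilon}$ for $s>1$ and $h_{X}(s)\le C_{\varepsilon}s^{\beta_{X}+\varepsilon}$ for $s<1$ are essential.

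For the converse direction, which is the necessity part of the ``if and only if'', I would apply $T$ to explicit test functions (e.g.\ characteristic functions $\chi_{E}$ of measurable sets of finite measure) together with the standard comparison with $L^{p,1}$ and $L^{q,1}$ Lorentz norms: if $T$ is bounded on $X$ with $\alpha_{X}\le 1/q$ or $\beta_{X}\ge 1/p$, then one constructs a sequence of normalized characteristic functions whose images under $T$ have $X$-norm blowing up, using the explicit dilation behavior of Lorentz norms and the definition of $\alpha_X,\beta_X$ as critical exponents. The delicate point will be to carry out this counterexample construction compatibly with $T$ being only quasilinear rather than linear, which is handled by choosing disjointly supported test functions and using the quasilinearity constant only finitely many times.
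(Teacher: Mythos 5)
First, note that the paper itself contains no proof of Theorem~\ref{thm-6-9}: it is Boyd's interpolation theorem, quoted from the reference \cite{2} (Bennett--Sharpley), and only its sufficiency half is subsequently used (to get boundedness of $\cM$ and $\cK$ in $M^{q,\lambda}(\Omega)$). Your sufficiency argument reconstructs exactly the route of that source -- pointwise domination of $(Tf)^{*}$ by the Calder\'on operator $S_{p,q}f^{*}$, the Luxemburg representation, Minkowski's integral inequality, and the dilation function $h_X$ -- and in substance it is correct. One internal slip: with your definitions $\alpha_X=\lim_{s\to0^+}\log h_X(s)/\log s$ and $\beta_X=\lim_{s\to\infty}\log h_X(s)/\log s$, the power bounds coming from submultiplicativity are $h_X(s)\le C_\varepsilon s^{\beta_X+\varepsilon}$ for $s>1$ and $h_X(s)\le C_\varepsilon s^{\alpha_X-\varepsilon}$ for $0<s<1$; you stated them the other way around, which, taken literally, would yield convergence conditions contradicting the (correct) conclusions $\beta_X<1/p$ and $\alpha_X>1/q$ that you draw from the two integrals. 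This should be fixed, though it reads as a typo rather than a conceptual error.

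The genuine gap is in the necessity half. Read for a fixed individual operator, the ``only if'' is simply false (take $T=0$, which is bounded on every $X$), so the statement must be read as quantified over all operators of $(p,p;q,q)$-compatible weak type, which is how it is formulated in \cite{2}. Your plan -- assume a given bounded $T$ while $\alpha_X\le 1/q$ or $\beta_X\ge 1/p$ and produce characteristic functions whose images under $T$ blow up in $X$-norm -- cannot work for an arbitrary such $T$, quasilinear or not, precisely because nothing forces a general operator of joint weak type to be extremal. The standard argument instead exhibits one concrete operator of joint weak type, essentially $f\mapsto S_{p,q}f^{*}$ (the Calder\'on operator itself), and shows it is unbounded on $X$ whenever the index condition fails, by testing on $\chi_{(0,a)}$ and comparing $\|S_{p,q}\chi_{(0,a)}\|_{\overline X}$ with the dilation norms $h_X(a)$; no construction involving a generic $T$ is needed or possible. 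Since the paper only invokes the sufficiency direction, this does not affect the application to $M^{q,\lambda}(\Omega)$, but as a proof of the stated equivalence your last paragraph must be replaced by the Calder\'on-operator counterexample.
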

The Theorem~\ref{thm-6-9} (cf. \cite{2}) implies the validity of   the Property\ref{Hyp-2-1}   for   $M^{q,\lambda } (\Omega)$ for any  $q\in(1,\infty),$  $ \lambda \in(0,1).$ The validity of Property~\ref{Con-2-2}   can be  proved  analogously as in the   case of the  Morrey spaces. Hence, by Theorem~\ref{thm-4-1} we have the following result.
\begin{corollary}\label{cor-6-5}
Under the conditions of Theorem~\ref{thm-6-9}  there exists  $u\in  W^m_{M^{q,\lambda}}(\Omega ),$ a solution    of  $\cL u=f,$  
 for all  $ f\in M^{q,\lambda } (\Omega ),$  $q\in (1,\infty) ,$   $ \lambda\in(0,1).$
\end{corollary}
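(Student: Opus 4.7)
The plan has two parts: first verify that $X_w=M^{q,\lambda}(\Omega)$ satisfies the structural hypotheses of Theorems~\ref{thm-4-1} and~\ref{thm-5-1}; then upgrade the local existence of Theorem~\ref{thm-4-1} to a genuinely global solution on $\Omega$ via a parametrix / Neumann--series construction.

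\textbf{Structural hypotheses.} Property~\ref{Hyp-2-1}(A) follows from the Boyd indices $\alpha_X=\beta_X=(1-\lambda)/q$ obtained in Lemma~\ref{lem-6-4}: since $\cM$ and $\cK$ are of $(p,p;q,q)$-compatible weak type, Theorem~\ref{thm-6-9} gives $\cM,\cK\in[M^{q,\lambda}(\Omega)]$. Property~\ref{Hyp-2-1}(B) is obtained by taking $E=\Omega$ in the definition of the Marcinkiewicz norm, which yields $\|f\|_{L^q(\Omega)}\leq|\Omega|^{\lambda/q}\|f\|_{M^{q,\lambda}(\Omega)}$ and hence $M^{q,\lambda}(\Omega)\hookrightarrow L^p(\Omega)$ for every $p\in[1,q]$ since $\Omega$ is bounded. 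For Property~\ref{Hyp-2-2} I would invoke Lemma~\ref{lem-6-1}: it suffices to construct a bounded $\theta_k\colon W^k_{M^{q,\lambda}}(\Omega)\to W^k_{M^{q,\lambda}}(\Omega')$ for each $k=0,\ldots,m$, which is routine for a Stein-type reflection extension (a finite combination of translations and smooth cut-offs, bounded on every rearrangement-invariant Sobolev space of integer order).

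\textbf{Local right inverses and the parametrix.} Theorem~\ref{thm-4-1} applied at each $x_0\in\Omega$ provides a radius $r(x_0)>0$ and a bounded linear operator $\cU_{x_0}:=(\Id-\cT_{x_0})^{-1}\cS_{x_0}\colon M^{q,\lambda}(\Omega)\to W^m_{M^{q,\lambda}}(\B_{r(x_0)}(x_0))$ satisfying $\cL\cU_{x_0}f=f$ a.e.\ on $\B_{r(x_0)}(x_0)$. Extracting from $\{\B_{r(x)/2}(x)\}_{x\in\overline\Omega}$ a finite sub-cover $\{\B_{\rho_i}(x_i)\}_{i=1}^N$ (with $\rho_i\leq r(x_i)/2$) and a subordinate $C_0^\infty$ partition of unity $\{\phi_i\}$, I define the parametrix
\[
\cP f\ :=\ \sum_{i=1}^{N}\phi_i\cdot\cU_{x_i}f\ \in\ W^m_{M^{q,\lambda}}(\Omega),
\]
each summand extended by zero outside its ball. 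Leibniz' rule and $\sum_i\phi_i\equiv1$ on $\overline\Omega$ give
\[
\cL\cP f\ =\ f\,+\,\sum_{i=1}^{N}[\cL,\phi_i]\,\cU_{x_i}f\ =:\ f+\cR_\Omega f,
\]
where each commutator $[\cL,\phi_i]$ is a differential operator of order at most $m-1$ supported in $\B_{\rho_i}(x_i)$.

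\textbf{The main obstacle --- contraction of $\cR_\Omega$.} The heart of the argument is to show $\|\cR_\Omega\|_{[M^{q,\lambda}(\Omega)]}<1$ after a judicious shrinking of the $\rho_i$. Each term $[\cL,\phi_i]\cU_{x_i}f$ is dominated in $M^{q,\lambda}(\Omega)$ by $\|\phi_i\|_{C^m}$ times $\|\cU_{x_i}f\|_{W^{m-1}_{M^{q,\lambda}}(\B_{\rho_i}(x_i))}$. The first factor grows like $\rho_i^{-m}$, while by the interior Schauder estimate Theorem~\ref{thm-5-1} together with the interpolation Lemma~\ref{lem-5-3}, the second factor can be made arbitrarily small against $\|f\|_{M^{q,\lambda}}$ through the vanishing quantities $\sigma(\rho_i)$ from Lemma~\ref{MainLemma}. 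Tracking these dependencies precisely --- in particular, how the $\rho_i^{-m}$ blow-up of $\|\phi_i\|_{C^m}$ is absorbed by the positive powers of $\rho_i$ attached to lower-order derivatives in the scaled norms $\|\cdot\|_{W^{m-1}_{M^{q,\lambda}_{;\rho_i}}}$ of Lemma~\ref{lem-5-2} --- is the principal technical difficulty. Once $\|\cR_\Omega\|<1$ is achieved, the Neumann series yields $(\Id+\cR_\Omega)^{-1}\in[M^{q,\lambda}(\Omega)]$, and
\[
u\ :=\ \cP(\Id+\cR_\Omega)^{-1}f\ \in\ W^m_{M^{q,\lambda}}(\Omega)
\]
solves $\cL u=f$ on all of $\Omega$, which is the desired assertion.
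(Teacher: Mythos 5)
The first half of your proposal coincides with the paper's argument: Property~\ref{Hyp-2-1}(A) for $M^{q,\lambda}(\Omega)$ via the Boyd indices of Lemma~\ref{lem-6-4} and Theorem~\ref{thm-6-9}, the elementary embeddings of $M^{q,\lambda}(\Omega)$ into Lebesgue spaces, the extension property, and then an appeal to Theorem~\ref{thm-4-1}. That is in fact the paper's \emph{entire} proof, because the corollary is intended as solvability in the small (compare Corollary~\ref{cor-6-8}, where the ball $\B_r(x_0)$ is written explicitly; the $\Omega$ in the statement of Corollary~\ref{cor-6-5} is loose notation, consistent with the abstract's ``solvability in the small''). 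Your second half aims at a strictly stronger, global statement, and that is where there is a genuine gap.

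The step you defer --- obtaining $\|\cR_\Omega\|_{[M^{q,\lambda}(\Omega)]}<1$ by shrinking the radii $\rho_i$ --- is not a matter of ``tracking dependencies''; it fails at the top total order. The local right inverse satisfies, in the scaled norms of Lemma~\ref{lem-5-2}, $\|D^{\alpha''}\cU_{x_i}f\|_{X_w}\le c\,\rho_i^{\,m-|\alpha''|}\|f\|_{X_w}$ (this is the best Theorem~\ref{thm-4-1} and the estimate \eqref{eq-5-6} give), while $D^{\alpha'}\phi_i\sim\rho_i^{-|\alpha'|}$. Hence a commutator term $a_\alpha D^{\alpha'}\phi_i\,D^{\alpha''}\cU_{x_i}f$ with $|\alpha'|+|\alpha''|=m$, $|\alpha''|\le m-1$ (such terms always occur, coming from the principal part of $\cL$) is of size $O(1)\,\|f\|_{M^{q,\lambda}(\Omega)}$: the negative power from the cut-off exactly cancels the positive power gained on the lower-order derivatives, so no smallness survives, and shrinking the $\rho_i$ also increases the number $N$ of patches. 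Interior estimates (Theorem~\ref{thm-5-1}) and Lemma~\ref{lem-5-3} cannot help either: they bound $\|\cU_{x_i}f\|_{W^{m-1}_{M^{q,\lambda}}}$ by a constant times $\|f\|$, they do not make it small relative to $\|f\|$. What your construction genuinely yields is $\cL\cP=\Id+\cR_\Omega$ with $\cR_\Omega$ of lower order --- the standard route to Fredholmness (which the paper explicitly postpones), not to invertibility by a Neumann series; and indeed global solvability on $\Omega$ without boundary conditions is not claimed anywhere in the paper. Either restrict your conclusion to the local statement, in which case your first paragraph together with Theorem~\ref{thm-4-1} already suffices and matches the paper, or an essentially different argument (a boundary value problem plus Fredholm/index theory) would be required.
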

In addition, by  Theorem~\ref{thm-5-1} we obtain.
\begin{corollary}\label{cor-6-6}
Under the conditions of  Theorem~\ref{thm-6-9}  the following   a priori estimate holds
$$
\| u\|_{W_{M^{q,\lambda } }^m(\Omega_{0} )} \le c(\| \cL u\|_{M^{q,\lambda } (\Omega )} +\| u\|_{M^{q,\lambda } (\Omega )} ).
$$
\end{corollary}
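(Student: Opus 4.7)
The plan is to deduce Corollary~\ref{cor-6-6} as an immediate specialization of Theorem~\ref{thm-5-1} with $X_w(\Omega) = M^{q,\lambda}(\Omega)$ (trivial weight $w\equiv 1$). Thus the entire task reduces to verifying that Properties~\ref{Hyp-2-1} and~\ref{Hyp-2-2} hold for the Marcinkiewicz space; the coefficient assumptions on $\cL$ are already those in Theorem~\ref{thm-5-1}.

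For Property~\ref{Hyp-2-1}(A), I would invoke what is already sketched in the paragraph preceding the corollary: combine the Boyd indices computation $\alpha_{M^{q,\lambda}} = \beta_{M^{q,\lambda}} = (1-\lambda)/q$ from Lemma~\ref{lem-6-4} with Theorem~\ref{thm-6-9}. The Hardy--Littlewood maximal operator $\cM$ is of $(1,1;\infty,\infty)$-compatible weak type, so the conditions $0 < \alpha_{M^{q,\lambda}} \le \beta_{M^{q,\lambda}} < 1$ are satisfied since $q>1$ and $\lambda\in(0,1)$. The Calder\'on--Zygmund operator $\cK$ is of $(p,p;p',p')$-compatible weak type for any $p\in(1,\infty)$, so choosing $p$ with $p < q/(1-\lambda) < p'$ (possible because $q/(1-\lambda)>1$) puts the Boyd indices in the required strip $1/p' < \alpha \le \beta < 1/p$. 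For Property~\ref{Hyp-2-1}(B), taking $E=\Omega$ in the supremum defining $\|\cdot\|_{M^{q,\lambda}}$ yields
$$
\|f\|_{L^q(\Omega)} \leq |\Omega|^{\lambda/q}\,\|f\|_{M^{q,\lambda}(\Omega)},
$$
so $M^{q,\lambda}(\Omega)\subset L^p(\Omega)$ for every $p\in[1,q]$, which covers the embedding \eqref{eq-spaces} with $p_0=q$.

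For Property~\ref{Hyp-2-2} I would construct an extension operator by the standard reflection across $\partial\Omega$ used in the classical Sobolev case: because the Marcinkiewicz norm is rearrangement invariant, reflection composed with a smooth cut-off is bounded on $M^{q,\lambda}$, and componentwise differentiation shows this same map is bounded on each layer $W^{k}_{M^{q,\lambda}}(\Omega)$ for $k=0,1,\dots,m$ separately. Lemma~\ref{lem-6-1} is then designed precisely to splice those level-by-level extensions into a single linear operator $\theta$ satisfying \eqref{E} for every $k=0,\dots,m$ at once.

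With both properties in hand, Theorem~\ref{thm-5-1} applied on any concentric pair $\Omega_0 \Subset \Omega$ delivers exactly the estimate \eqref{eq-th5.3} with $X_w = M^{q,\lambda}$, which is the assertion of the corollary. The only nontrivial step is the extension property: one must confirm that the reflection construction really produces bounded maps in $M^{q,\lambda}$ at each order. This is routine once rearrangement invariance is used and the derivatives of the reflection map are absorbed into the constant, so the whole argument is essentially a verification-and-invoke strategy rather than a new estimate.
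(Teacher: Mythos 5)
Your proposal follows essentially the paper's own route: establish Property~\ref{Hyp-2-1} for $M^{q,\lambda}(\Omega)$ through the Boyd indices of Lemma~\ref{lem-6-4} combined with Theorem~\ref{thm-6-9}, and then apply the interior Schauder estimate of Theorem~\ref{thm-5-1} with $X_w=M^{q,\lambda}$ on $\Omega_0\Subset\Omega$. The only item the paper records that you omit is the verification of Condition~\ref{Con-2-2} (namely $L^{p}(\Omega)\subset M^{q,\lambda}(\Omega)$ for $p\ge q/(1-\lambda)$, proved ``as in the Morrey case'' and needed because the proof of Theorem~\ref{thm-5-1} relies on the interpolation Lemma~\ref{lem-5-3}), while your reflection-based check of the extension Property~\ref{Hyp-2-2} supplies detail the paper leaves implicit.
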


\subsection{Grand Lebesgue spaces } 

 The Grand Lebesgue spaces 
$L_{q)} (\Omega ),$  $q\in(1,\infty),$ are  non separable Banach  spaces endowed by  the norm 
$$
\| f\|_{q)} = \sup_{0<\varepsilon <q-1} \Big(\varepsilon \int_{\Omega }|f(x)|^{q-\varepsilon }\, dx \Big)^\frac{1}{q-\varepsilon } .
$$
 The Property~\ref{Hyp-2-1} holds for  $L_{q)}(\Omega)$ as it is proved in \cite{22}  and  the following continuous embedding is valid
$$
L^{q} \subset L^{q)} \subset L^{q-\varepsilon } ,\qquad  \forall\,  \varepsilon \in (0,q-1).
$$ 
Consider  the Grand  Sobolev spaces  $W_{L^{q)}}^m (\Omega )$ builded on the spaces $L^{q)} (\Omega ).$  
\begin{corollary}\label{cor-6-8}
Suppose that the conditions of Theorem~\ref{thm-5-1} hold true and $x_0\in \Omega$ verifies the property  \eqref{eq-P}.  Then  there exists a solution $u\in 
W_{L^{q)}}^{m} (\B_{r} (x_{0} )) $ of the equation $\cL u=f$, for   $r$ small enough and for all  $ f\in L^{q)} (\B_r(x_0) ).$
\end{corollary}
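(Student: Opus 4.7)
The plan is to specialize the abstract local existence Theorem~\ref{thm-4-1} to the case $X_w(\Omega)=L^{q)}(\Omega)$ with the trivial weight $w\equiv 1$. Since the ellipticity \eqref{el-cond}, the $L^{\infty}$-regularity of the coefficients of $\cL$, and the property \eqref{eq-P} at $x_0$ are contained in the hypotheses, what needs to be checked is that the Grand Lebesgue space $L^{q)}(\Omega)$ satisfies Properties~\ref{Hyp-2-1} and~\ref{Hyp-2-2}. The choice $w\equiv 1$ is admissible because $\chi_\Omega\in L^{q)}(\Omega)$ and likewise $\chi_\Omega$ belongs to the associate space, since $\Omega$ is bounded.

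For Property~\ref{Hyp-2-1}, part~$(A)$ is exactly the boundedness of $\cM$ and $\cK$ on $L^{q)}(\Omega)$ established in \cite{22} and recalled just before the statement. Part~$(B)$ is a consequence of the continuous embedding $L^{q)}(\Omega)\subset L^{q-\varepsilon}(\Omega)$ valid for every $\varepsilon\in(0,q-1)$: fixing some $\varepsilon_0\in(0,q-1)$ and setting $p_0:=q-\varepsilon_0>1$, the H\"older inequality on the bounded domain $\Omega$ gives $L^{q)}(\Omega)\subset L^p(\Omega)$ for all $p\in[1,p_0]$.

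For Property~\ref{Hyp-2-2} I would proceed order by order. For $k=0$ extension by zero across $\partial\Omega$ even preserves the Grand Lebesgue norm and so furnishes an isometric embedding $L^{q)}(\Omega)\hookrightarrow L^{q)}(\Omega')$. For $k=1,\ldots,m$ I would invoke the classical Calder\'on--Stein total extension operator $\theta_k:W^{p,k}(\Omega)\to W^{p,k}(\Omega')$ available for $C^m$-boundaries, constructed by a universal formula independent of the exponent $p\in(1,\infty)$. Applying the corresponding $W^{p,k}$-bound with $p=q-\varepsilon$ and taking the $\varepsilon$-supremum defining $\|\cdot\|_{q)}$ would yield $\theta_k\in [W^k_{L^{q)}}(\Omega);W^k_{L^{q)}}(\Omega')]$. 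Lemma~\ref{lem-6-1} then patches the family $\{\theta_k\}_{k=0}^m$ into a single linear operator $\theta$ that simultaneously realizes \eqref{E} at every order $0\le k\le m$. With Properties~\ref{Hyp-2-1} and~\ref{Hyp-2-2} thus verified, Theorem~\ref{thm-4-1} applies at $x_0$ and furnishes, for $r$ small enough, a solution $u\in W^m_{L^{q)}}(\B_r(x_0))$ of $\cL u=f$ for every $f\in L^{q)}(\B_r(x_0))$.

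The hard part will be keeping the Calder\'on--Stein constant $C(p)=\|\theta_k\|_{[W^{p,k}(\Omega);W^{p,k}(\Omega')]}$ uniformly bounded as $p\to q^-$, so that the supremum over $\varepsilon\in(0,q-1)$ in the Grand Lebesgue norm actually remains finite after passing from the $W^{p,k}$-estimate to a Grand-Sobolev estimate. This uniformity should be extracted from the explicit pointwise form of the Stein extension together with the uniform $L^p$-boundedness, on any compact subinterval of $(1,\infty)$, of the singular integrals and maximal operators that appear in its construction.
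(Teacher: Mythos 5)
Your proof is correct and takes essentially the same approach as the paper, which derives the corollary directly from Theorem~\ref{thm-4-1} after citing \cite{22} for Property~\ref{Hyp-2-1} and invoking the embedding chain $L^q\subset L^{q)}\subset L^{q-\varepsilon}$. You supply additional detail on the extension Property~\ref{Hyp-2-2} that the paper leaves implicit; the uniformity concern you raise about the Stein extension constant $C(p)$ as $p\to q^-$ is benign, since that constant is locally bounded in $p$ on $(1,\infty)$ and the exponents $q-\varepsilon$ appearing in the Grand Lebesgue supremum lie in the compact range $[1,q]$.
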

\begin{corollary}
Under the conditions of Theorem~\ref{thm-5-1}  the following a priori estimate holds
$$
\| u\|_{W_{L^{q)}}^{m} (\Omega _{0} )} \le c\big(\| \cL u\| _{L^{q)} (\Omega)} +\| u \| _{L^{q)} (\Omega )} \big).
$$ 
\end{corollary}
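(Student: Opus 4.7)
The strategy is to apply Theorem~\ref{thm-5-1} directly with $X_w=L^{q)}(\Omega)$ (trivial weight $w\equiv 1$). Once the grand Lebesgue space is shown to satisfy Properties~\ref{Hyp-2-1} and~\ref{Hyp-2-2}, the desired estimate reads off verbatim from the statement of Theorem~\ref{thm-5-1}, so the whole matter reduces to verifying these two structural properties for $L^{q)}(\Omega)$.

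Property~\ref{Hyp-2-1}(A) --- boundedness of the maximal operator $\cM$ and the Calder\'on--Zygmund operator $\cK$ on $L^{q)}(\Omega)$ --- is precisely the result of \cite{22} recalled in the paragraph introducing $L^{q)}$. Property~\ref{Hyp-2-1}(B) is immediate from the continuous embedding chain
\[
L^{q)}(\Omega)\subset L^{q-\varepsilon}(\Omega)\subset L^{p}(\Omega),\qquad 1\le p\le q-\varepsilon,
\]
valid because $\Omega$ is bounded; one may take $p_0=q-\varepsilon$ for any small $\varepsilon\in(0,q-1)$.

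The main work is Property~\ref{Hyp-2-2}, the extension property for the grand Sobolev spaces $W^{m}_{L^{q)}}$. Here I would invoke Lemma~\ref{lem-6-1}: it suffices to construct, for each $k\in\{0,\ldots,m\}$, a bounded linear extension $\theta_k\colon W^{k}_{L^{q)}}(\Omega)\to W^{k}_{L^{q)}}(\Omega_0)$ with $\theta_k f\big|_\Omega=f$. Since $\partial\Omega\in C^m$, the classical Calder\'on/Stein extension is a \emph{single} linear operator that is simultaneously bounded $W^{k,p}(\Omega)\to W^{k,p}(\Omega_0)$ for every $k\le m$ and every $p\in[1,\infty]$, with operator norm $C=C(\Omega,\Omega_0,m)$ independent of $p$. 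Taking this as $\theta_k$, for any multi-index $|\alpha|\le k$ and every $\varepsilon\in(0,q-1)$,
\[
\Bigl(\varepsilon\int_{\Omega_0}|D^\alpha(\theta_k f)|^{q-\varepsilon}\,dx\Bigr)^{\!\frac{1}{q-\varepsilon}}\le C\Bigl(\varepsilon\int_{\Omega}|D^\alpha f|^{q-\varepsilon}\,dx\Bigr)^{\!\frac{1}{q-\varepsilon}}\le C\,\|f\|_{W^{k}_{L^{q)}}(\Omega)}.
\]
Taking the supremum over $\varepsilon$ on the left yields boundedness of $\theta_k$ on the grand Sobolev space, and Lemma~\ref{lem-6-1} then assembles these operators into the single extension required by Property~\ref{Hyp-2-2}.

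With both Properties~\ref{Hyp-2-1} and~\ref{Hyp-2-2} verified, Theorem~\ref{thm-5-1} applied to $X_w=L^{q)}$ gives at once
\[
\|u\|_{W^{m}_{L^{q)}}(\Omega_0)}\le c\bigl(\|\cL u\|_{L^{q)}(\Omega)}+\|u\|_{L^{q)}(\Omega)}\bigr),
\]
which is the claim. The key technical point --- and the only one deserving the name of an obstacle --- is the $p$-uniformity of the extension, which is what permits the supremum in the grand norm to be brought outside. This is a standard feature of the Calder\'on/Stein extension, since it is built from convolutions with fixed, compactly supported kernels localised near $\partial\Omega$, so that its $W^{k,p}$ operator norm depends only on the geometry of $\Omega,\Omega_0$ and on $k,m$, not on $p$.
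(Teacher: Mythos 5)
Your proposal is correct and follows essentially the same route as the paper, which states this corollary as a direct application of Theorem~\ref{thm-5-1} with $X_w=L^{q)}(\Omega)$ (trivial weight), relying on \cite{22} for Property~\ref{Hyp-2-1} and the embeddings $L^{q}\subset L^{q)}\subset L^{q-\varepsilon}$. The only difference is that you spell out the verification of the extension Property~\ref{Hyp-2-2} via the $p$-uniform Stein extension and Lemma~\ref{lem-6-1}, a step the paper leaves implicit; that addition is sound and welcome.
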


\subsection{Variable Lebesgue spaces }

 {Given a bounded domain  $\Omega$ and a Lebesgue measurable function $p(\cdot):\Omega \to [1,+\infty ).$ We say that $p(\cdot)$  is {\it locally 
 log-H\"older continuous function}  if there exists a constant $c_0$ such that for all   $ x,y\in \Omega$ verifying $  |x-y|<1/2,$ it holds
$$
|p(x)-p(y)|\leq \frac{c_0}{-\log (|x-y|)} .
$$ 
We denote this function set as $LH_0(\Omega).$  It follows immediately (see \cite{15}) that if $p(\cdot)\in LH_0(\Omega)$ than it is uniformly continuous and $p(\cdot)\in L^\infty(\Omega).$}
 
{ For a Lebesgue measurable function $f\in F(\Omega)$  we define the {\it modular} associated with $p(\cdot)$ by 
$$
I_{{p(\cdot),\Omega}} (f )=\int _{\Omega }|f(x)|^{p(x)} \,dx.
$$ 
Then the {\it Variable Lebesgue spaces} $L^{p(\cdot)}(\Omega),$ $p(\cdot)\in LH_0(\Omega)$ consist  of all $f\in F(\Omega)$ such that 
$
 I_{p(\cdot ),\Omega} (f)<+\infty,$ 
for which the following  norm is finite
\begin{equation}\label{eq-Lpx}
\| f \|_{L^{p(\cdot )}(\Omega)} =\inf \Big\{\lambda >0: \  I_{p(\cdot ),\Omega} \Big(\frac{f}{\lambda } \Big)\le 1\Big\}.
\end{equation}
The spaces $L^{p(\cdot)}(\Omega),$ $p(\cdot)\in LH_0(\Omega),$ endowed with 
the norm \eqref{eq-Lpx} are  BFSs.  }

 The corresponding Sobolev spaces are  denoted by $W^{p(\cdot),m} (\Omega)$ and for a given weight  $w$  we can define also  the weighted spaces  $L^{p(\cdot )}_w (\Omega)$  and $ W^{p(\cdot),m}_w (\Omega ).$

{The  weighted $L^{p(\cdot)} $ spaces defined by  the class of variable   Muckenhoupt  weights $A_{p(\cdot)}$ are of particular interest. }
\begin{definition}\label{def-6-2} 
We say that $w\in A_{p(\cdot)}$ if 
$$
[w]_{A_{p(\cdot )} } =\sup_Q |Q|^{-1} \| w\chi_Q \|_{p(\cdot)} \| w^{-1} \chi_Q \|_{p'(\cdot )} <+\infty ,
$$ 
where the  supremum  is taken over all cubes $Q\subset \R^n $ with sides parallel to coordinate axes and $p'(\cdot)$ is the conjugate function,  
$\frac{1}{p(x)} +\frac{1}{p'(x)} =1$  for all $x\in \R.$
\end{definition}
The properties  of the weighted  Variable Lebesgue spaces (see \cite{15,22}) ensure the validity of the Property~\ref{Hyp-2-1}.  For a bounded domain $\Omega$ we have the  embedding $L^{p_{+} } (\Omega )\subset L^{p(\cdot )} (\Omega )\subset L^{p_{-} } (\Omega )$ with a measurable function $p(\cdot)$ verifying \eqref{eq-p}.

\begin{corollary}\label{cor-6-9}
Let  the conditions of Theorem~\ref{thm-4-1} hold and suppose that  
 $w\in LH_{0} (\Omega )\cap A_{p(\cdot )} (\R^{n} ),$  $ p_{-} >1,$ then there exists a solution   $u\in W^{p(\cdot),m}_w (\B_{r} (x_{0} ))  $ of the equation $\cL u=f$  for $r$ small enough and for all  $ f\in L^{p(\cdot)}_w (\Omega).$
\end{corollary}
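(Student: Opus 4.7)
The strategy is to reduce the statement to a direct application of Theorem~\ref{thm-4-1} by verifying that, under the hypotheses $w\in LH_0(\Omega)\cap A_{p(\cdot)}(\R^n)$ and $p_->1$, the weighted variable Lebesgue space $X_w(\Omega)=L^{p(\cdot)}_w(\Omega)$ satisfies Properties~\ref{Hyp-2-1} and~\ref{Hyp-2-2}. Once this is done, the existence of a strong solution $u\in W^{p(\cdot),m}_w(\B_r(x_0))$ of $\cL u=f$, for $r$ small enough, follows immediately from Theorem~\ref{thm-4-1}.

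First I would address Property~\ref{Hyp-2-1}(A), which is the analytic heart of the reduction. Under the log-H\"older continuity assumption on $p(\cdot)$ together with the variable Muckenhoupt condition $w\in A_{p(\cdot)}$ and the lower bound $p_->1$, the Hardy--Littlewood maximal operator $\cM$ and every Calder\'on--Zygmund singular integral operator $\cK$ are bounded on $L^{p(\cdot)}_w(\R^n)$; this is the weighted extrapolation/boundedness result for variable Lebesgue spaces (see \cite{15,22}). Restricting to the bounded domain $\Omega$ and using the trivial extension by zero preserves the estimates, so \eqref{eq-operators} holds in $L^{p(\cdot)}_w(\Omega)$.

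Next I would verify Property~\ref{Hyp-2-1}(B). Since $\Omega$ is bounded and $p_->1$, the inclusion $L^{p(\cdot)}(\Omega)\subset L^{p_-}(\Omega)$ is standard. For the weighted case, writing $f=(fw)\cdot w^{-1}$ and applying the variable-exponent H\"older inequality together with the $A_{p(\cdot)}$-property of $w$ (which gives $w^{-1}\in L^{p'(\cdot)}(\Omega)$ on the bounded set $\Omega$), one obtains a continuous embedding $L^{p(\cdot)}_w(\Omega)\hookrightarrow L^{p}(\Omega)$ for some $p>1$ sufficiently close to $1$. This furnishes the required $p_0$ in \eqref{eq-spaces}.

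It remains to verify Property~\ref{Hyp-2-2}. For the unweighted Sobolev spaces $W^{p(\cdot),k}(\Omega')$, $0\le k\le m,$ extension operators are known to exist for domains with $C^m$ boundary and log-H\"older exponents; by the argument used for weighted classical Sobolev spaces in \cite{14,CF,17}, combined with the inclusions in the preceding step, one gets order-by-order extension operators $\theta_k\in[W^{p(\cdot),k}_w(\Omega);W^{p(\cdot),k}_w(\Omega')]$ with $\theta_k f|_\Omega=f$. A direct application of Lemma~\ref{lem-6-1} then produces a single operator $\theta$ simultaneously acting on all orders $k=0,\ldots,m$, which is exactly the property \eqref{E}. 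The main obstacle is this last point: assembling a common extension operator valid on every order $W^{p(\cdot),k}_w$, $0\le k\le m$, from the scale of order-wise extension results; fortunately this is precisely what Lemma~\ref{lem-6-1} delivers once the individual $\theta_k$ are available. With Properties~\ref{Hyp-2-1} and~\ref{Hyp-2-2} in hand, Theorem~\ref{thm-4-1} applies to $X_w=L^{p(\cdot)}_w$ and yields the required strong solution $u\in W^{p(\cdot),m}_w(\B_r(x_0))$ of $\cL u=f$ for any $f\in L^{p(\cdot)}_w(\Omega)$ and $r$ sufficiently small.
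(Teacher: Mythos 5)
Your proposal is correct and follows essentially the same route as the paper: verify Property~\ref{Hyp-2-1} for $L^{p(\cdot)}_w$ via the weighted boundedness of $\cM$ and $\cK$ under $p(\cdot)\in LH_0$, $p_->1$, $w\in A_{p(\cdot)}$ (the paper cites \cite{15,22}), verify the extension Property~\ref{Hyp-2-2} by combining the order-wise extension operators of Theorem~\ref{thm-6-1} with Lemma~\ref{lem-6-1}, and then apply Theorem~\ref{thm-4-1}. The paper leaves these verifications as citations and brief remarks surrounding the corollary, so your write-up simply fills in the same argument in more detail.
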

The validity of the Property~\ref{Con-2-2} in this case  follows by  Lemma~\ref{lem-6-1}  and  \cite{15}. 
\begin{theorem}\label{thm-6-1} 
Let $\Omega \subset \R^{n} $ be a bounded domain with $C^k$ smooth boundary and $p(\cdot)\in LH_{0} (\Omega )$ such that  $1<p_{-} \le p_{+} <+\infty $. Then for every $k\ge 1$ there exists a bounded linear extension operator  
$$
\theta _{k} \in
\big[W^{p(\cdot),k} (\Omega );W^{p(\cdot),k} (\R^{n} )\big].
$$
\end{theorem}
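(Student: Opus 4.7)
The plan is to follow the classical Calderón-Stein extension construction, verifying at each step that the operations involved are bounded on $W^{p(\cdot),k}$ under the log-Hölder hypothesis on $p(\cdot).$ First, since $\partial\Omega\in C^k,$ I would cover $\overline\Omega$ by finitely many open balls $V_0,V_1,\ldots,V_N$ with $\overline{V_0}\subset \Omega$ and, for each $j\geq 1,$ a $C^k$-diffeomorphism $\Phi_j$ that straightens $\partial\Omega\cap V_j$ onto a piece of the hyperplane $\{x_n=0\}$ and maps $\Omega\cap V_j$ onto the upper half-ball. A subordinated smooth partition of unity $\{\eta_j\}_{j=0}^N$ then decomposes every $u\in W^{p(\cdot),k}(\Omega)$ as $u=\sum_{j=0}^N \eta_j u.$

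For each boundary piece $j\ge 1,$ consider the push-forward $u_j=(\eta_j u)\circ \Phi_j^{-1}$ on the upper half-ball and apply Stein's reflection
$$E u_j(x',x_n)=\sum_{l=1}^{k} c_l\, u_j(x',-\lambda_l x_n),\qquad x_n<0,$$
with coefficients $c_l,\lambda_l>0$ chosen so that all derivatives up to order $k-1$ match across $\{x_n=0\}.$ Pulling the extension $\widetilde u_j$ back via $\Phi_j,$ extending $\eta_0 u$ by zero, and summing gives the candidate operator
$$\theta_k u= \eta_0 u + \sum_{j=1}^N\bigl(\widetilde u_j\circ\Phi_j\bigr),$$
which, by construction, coincides with $u$ on $\Omega.$ Linearity is built in; the work is to control the norm.

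The technical core is to show that each of the three basic operations---multiplication by a $C^k_0$ cutoff, composition with a $C^k$ diffeomorphism of bounded Jacobian, and the half-space reflection $E$---maps $W^{p(\cdot),k}$ boundedly into itself. Multiplication is handled by the Leibniz rule together with the boundedness of pointwise multiplication on $L^{p(\cdot)}.$ For composition with a bi-Lipschitz diffeomorphism $\Psi$ one uses the modular change of variables
$$\int |u\circ\Psi(x)|^{p(x)}\,dx=\int |u(y)|^{p(\Psi^{-1}(y))}|\det D\Psi^{-1}(y)|\,dy,$$
which places the image in $L^{p\circ\Psi^{-1}}.$ Since $\Psi^{-1}$ is bi-Lipschitz the composed exponent inherits log-Hölder continuity, and the norm-equivalence principle for log-Hölder exponents (cf. \cite{15}) converts this back to control of the $L^{p(\cdot)}$ norm of $u\circ\Psi;$ the same estimate applied to $D^\alpha(u\circ\Psi)$ via the chain rule and induction on $|\alpha|\le k$ gives boundedness on $W^{p(\cdot),k}.$ The reflection $E$ is a finite combination of compositions with the dilations $(x',x_n)\mapsto(x',-\lambda_l x_n),$ so it inherits boundedness order by order.

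The main obstacle is coherent bookkeeping of the variable exponent under the charts: one must first globally extend $p(\cdot)$ from $\Omega$ to $\R^n$ preserving the $LH_0$ modulus (a McShane-type extension, cf. \cite{15}), so that each composed exponent $p\circ\Phi_j^{-1}$ is defined and log-Hölder on a neighbourhood of the flattened image of $V_j,$ and one must verify that the uniform log-Hölder moduli obtained across the finitely many charts yield uniform constants in all the modular estimates. Once this is in place, the boundedness of the Hardy-Littlewood maximal operator on $L^{p(\cdot)}$ under $LH_0$ with $1<p_-\le p_+<\infty$ provides the required norm comparisons, and the finite sum over charts yields the bounded linear extension $\theta_k\in [W^{p(\cdot),k}(\Omega);W^{p(\cdot),k}(\R^n)],$ as asserted.
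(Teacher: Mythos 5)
The paper does not actually prove Theorem~\ref{thm-6-1}: it is invoked as a known result from Cruz-Uribe and Fiorenza~\cite{15} and is used in conjunction with Lemma~\ref{lem-6-1}. So there is no internal proof to match against; you are reconstructing a result the authors cite.

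Your overall architecture (finite cover of $\overline\Omega$ by straightening charts, subordinate partition of unity, Stein--Hestenes reflection in the half-ball, pull back and sum) is the standard and correct one, and you have correctly isolated the genuine obstacle, namely the bookkeeping of the variable exponent through the charts. The gap is in how you dispose of that obstacle. You claim that since $\Psi^{-1}$ is bi-Lipschitz the composed exponent $p\circ\Psi^{-1}$ is again log-H\"older, and then a ``norm-equivalence principle for log-H\"older exponents'' identifies $L^{p\circ\Psi^{-1}(\cdot)}$ with $L^{p(\cdot)}$. No such principle exists: on a bounded domain two continuous exponents give the same variable Lebesgue space essentially only if they coincide a.e., and composition $u\mapsto u\circ\Psi$ is in general \emph{not} bounded on $L^{p(\cdot)}$ --- it lands in $L^{p\circ\Psi(\cdot)}$, a different space. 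The log-H\"older hypothesis does not, by itself, make the two spaces equivalent.

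What actually rescues the argument, and what you should make explicit, is that the operator $\theta_k$ involves a chart map $\Phi_j$ followed eventually by its inverse, so the exponent is \emph{almost} returned to itself; the only genuine perturbation comes from the reflection $(x',x_n)\mapsto(x',-\lambda_l x_n)$, which moves a point $x$ in $\Omega^c$ a distance comparable to $\dist(x,\partial\Omega)$. Consequently, if $\tilde p$ denotes the global $LH_0$ extension of $p$, the two exponents seen in the modular estimate differ by $O\big(1/\log(1/\dist(x,\partial\Omega))\big)$, and the modular bound then follows from the standard log-H\"older inequality $\lambda^{c/\log(1/\lambda)}\le C$ (this is the mechanism behind Diening's condition and the estimates in \cite{15,22}), not from any equivalence of spaces. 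Spelling this out --- that each reflected evaluation point is within $O(\dist(x,\partial\Omega))$ of $x$, and that the induced exponent perturbation is absorbed by the $LH_0$ modulus --- would close the gap and turn your sketch into a correct proof.
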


Applying the  Theorem~\ref{thm-5-1} to the case of weighted Variable Lebesgue spaces we are able to obtain a local a priori estimate for the solution. 
\begin{corollary}\label{cor-6-10} 
Let  $\Omega _{0} \Subset \Omega .$
Under   the conditions of  Theorem~\ref{thm-5-1}  and  Corollary~\ref{cor-6-9}
the following a priori estimate holds
\begin{equation}\label{apriori-Lpx}
\| u\|_{W^{p(\cdot),m} (\Omega_{0} )} \le c(\| \cL u\|_{L^{p(\cdot )} (\Omega )} +\| u\|_{L^{p(\cdot)} (\Omega)} ).
\end{equation}
\end{corollary}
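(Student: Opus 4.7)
The plan is to view this corollary as a direct specialization of Theorem~\ref{thm-5-1} to the particular BFS $X(\Omega)=L^{p(\cdot)}(\Omega)$ with trivial weight $w\equiv 1$, so that $X_w(\Omega)=L^{p(\cdot)}(\Omega)$ and $W^m_{X_w}(\Omega)=W^{p(\cdot),m}(\Omega)$. With this identification the conclusion \eqref{apriori-Lpx} is literally the statement of Theorem~\ref{thm-5-1}, provided we verify that this choice of $X_w$ meets the two standing hypotheses of that theorem, namely Property~\ref{Hyp-2-1} and Property~\ref{Hyp-2-2}.

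First I would check that $L^{p(\cdot)}(\Omega)$ is a BFS in the sense of Definition~\ref{def-BFS} (with $w\equiv 1$ trivially satisfying the running assumption $w\in X(\Omega)$, $w^{-1}\in X'(\Omega)$, since $|\Omega|<\infty$ and $p_+<\infty$). Axioms (P1)--(P5) for the Luxemburg norm \eqref{eq-Lpx} are classical; (P5) and (P4) use boundedness of $\Omega$ together with $p_->1$.

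Next I would verify Property~\ref{Hyp-2-1}. For part (A), under $p(\cdot)\in LH_0(\Omega)$ and $1<p_-\leq p_+<\infty$ the boundedness of the Hardy--Littlewood maximal operator $\cM$ and of the Calder\'on--Zygmund singular integral $\cK$ on $L^{p(\cdot)}(\Omega)$ is classical (see the references to \cite{15,22} already cited in the paper). For part (B), since $\Omega$ is bounded, H\"older's inequality in the form
\begin{equation*}
\|f\|_{L^{p}(\Omega)}\le c(|\Omega|,p_-)\,\|f\|_{L^{p(\cdot)}(\Omega)},\qquad 1\le p\le p_-,
\end{equation*}
gives the embedding $L^{p(\cdot)}(\Omega)\subset L^{p}(\Omega)$ for every $p\in[1,p_0]$ with $p_0:=p_->1$.

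Then I would invoke Property~\ref{Hyp-2-2}, which for $W^{p(\cdot),m}(\Omega)$ is provided precisely by Theorem~\ref{thm-6-1}: for any domain $\Omega'\Supset\Omega$ with sufficiently smooth boundary, the bounded extension operator $\theta_k\in[W^{p(\cdot),k}(\Omega);W^{p(\cdot),k}(\R^n)]$ from Theorem~\ref{thm-6-1}, combined with the usual restriction to $\Omega'$, realizes \eqref{E} for every $k=0,\ldots,m$ (Lemma~\ref{lem-6-1} further shows that these can be taken compatibly across $k$). Finally, since the coefficient conditions in Theorem~\ref{thm-5-1} ($a_\alpha\in C(\overline\Omega)$ for $|\alpha|=m$, $a_\alpha\in L^\infty(\Omega)$ for $|\alpha|<m$) are the standing assumptions inherited through Corollary~\ref{cor-6-9}, I would simply apply Theorem~\ref{thm-5-1} with $X_w=L^{p(\cdot)}$ to obtain \eqref{apriori-Lpx}.

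The only potential obstacle is checking that the specific space $L^{p(\cdot)}(\Omega)$ indeed satisfies Property~\ref{Hyp-2-1} and Property~\ref{Hyp-2-2} in the exact form demanded by Theorem~\ref{thm-5-1}, but each of these has already been established in the paper (or cited from the literature) in the preceding paragraphs of Section~6.3; no genuine new analysis is required beyond packaging these facts together. Hence the corollary is a clean application of the abstract theorem and I expect the proof to reduce to a short verification paragraph.
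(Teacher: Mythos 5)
Your skeleton — check that the concrete space satisfies Properties~\ref{Hyp-2-1} and~\ref{Hyp-2-2} and then quote Theorem~\ref{thm-5-1} verbatim — is exactly what the paper does; its ``proof'' of Corollary~\ref{cor-6-10} is the single sentence preceding it, relying on the discussion of Section~6.3 (boundedness of $\cM$ and $\cK$ from \cite{15,22}, the embeddings $L^{p_+}(\Omega)\subset L^{p(\cdot)}(\Omega)\subset L^{p_-}(\Omega)$, and the extension property via Theorem~\ref{thm-6-1} together with Lemma~\ref{lem-6-1}). So in structure you and the paper agree.

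The substantive issue is your choice $w\equiv 1$. The corollary is stated ``under the conditions of Corollary~\ref{cor-6-9}'', which bring in a nontrivial weight $w\in LH_0(\Omega)\cap A_{p(\cdot)}(\R^n)$ and a solution $u\in W^{p(\cdot),m}_w$; the missing subscript $w$ in \eqref{apriori-Lpx} is a notational slip, as the sentence before the corollary (``Applying Theorem~\ref{thm-5-1} to the case of \emph{weighted} Variable Lebesgue spaces\ldots'') and the closing remark (the unweighted estimate is already in the literature) make clear. By trivializing the weight you prove only the unweighted estimate and bypass precisely the three verifications that carry the content of the corollary: (i) Property~\ref{Hyp-2-1}(A) for $X_w=L^{p(\cdot)}_w$, i.e.\ boundedness of $\cM$ and $\cK$ on the \emph{weighted} variable space for $w\in A_{p(\cdot)}$ (this is what \cite{15,22} are cited for); (ii) Property~\ref{Hyp-2-1}(B), the embedding $L^{p(\cdot)}_w(\Omega)\subset L^{p}(\Omega)$ for small $p$, which no longer follows from your H\"older estimate alone but needs the standing assumption $w^{-1}\in X'(\Omega)$ (or the $w\in L^{1+0}(\Omega)$ argument of Remark~\ref{Remark 2.1.}); and (iii) Property~\ref{Hyp-2-2} for $W^{p(\cdot),k}_w$ — Theorem~\ref{thm-6-1} as stated is unweighted, so the weighted extension must be assembled from it, Lemma~\ref{lem-6-1} and the weighted theory in \cite{15}. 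If you reinstate the weight and supply (i)--(iii), your argument becomes the paper's; as written, it proves a weaker statement than the one intended.
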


 In the  case of Variable Lebesgue spaces without any  weight, the estimate   \eqref{apriori-Lpx} is obtained in

{\bf Acknowledgment}
{\small 
The research of B. Bilalov,  and S. Sadigova is 
 supported by the  Azerbaijan Science Foundation-Grant No: AEF-MQM-QA-1-2021-4(41)-8/02/1-m-02.

 The research of L.Softova  is partially supported by the MIUR PRIN 2022, Grant No. D53D23005580006 (Salerno Unity). L. Softova  is a member of INDAM-GNAMPA.

The authors are indebted to the referee for the valuable remarks and suggestions. 

  No potential  financial or non-financial competing interests to report by the authors. }


\begin{thebibliography}{XX} 



\bibitem{APS} Alfano E.A., Palagachev D.P.,  Softova L.G.,  Higher order elliptic equations in generalized Morrey spaces, in: Recent adv. Mathem. Anal., Trends Math. , Birkh\"auser/Springer, Cham, 2023, 57-68. doi: 10.1007/978-3-031-20021-2\_5




\bibitem{2}  Bennett C., Sharpley R., Interpolation of Operators, Academic Press, 1988,469 p. 

\bibitem{3} Bers L., John F., Schechter M., Partial Differential Equations, Lectures in Applied Mathematics, 3A. Providence, R.I.: AMS XIII, 343 p., 1979.



\bibitem{8}  Bilalov B.T., Sadigova S.R., On solvability in the small of higher order elliptic equations in grand-Sobolev spaces, Compl. Var. Elliptic Equ.,  {\bf 66}, No.~12. (2021), 2117--2130, doi: 10.1080/17476933.2020.1807965

\bibitem{9}  Bilalov B.T., Sadigova S.R., Interior Schauder-type estimates for higher order elliptic operators in grand-Sobolev spaces, Sahand Comm. Math. Anal., {\bf 1}, No.~2 (2021), 129--148, doi: 10.22130/scma.2021.521544.893 

\bibitem{40}  Bilalov B.T.,  Sadigova S.R.,  On local solvability of higher order elliptic equations in rearrangement invariant spaces, Siberian Math. J., {\bf 63}, No.~3 (2022)  425--437, doi: 10.1134/S0037446622030041



\bibitem{34}  Bilalov B.T., Sadigova S.R., Alili V.G. On solvability of Riemann problems in Banach Hardy classes. Filomat {\bf 35},  No.~10 (2021), 3331--3352.
 doi.org/10.2298/FIL2110331B


\bibitem{14} Chua S.K., Extension theorems on weighted Sobolev spaces, Indiana Math. J., {\bf 41}, No.~4 (1992), 1027--1076, 
doi: 10.1512/iumj.1992.41.41053

\bibitem{15}  Cruz-Uribe D., Fiorenza A., Variable Lebesgue Spaces, Birkhauser, Springer, 2013. 


\bibitem{CF} Coifman R.R., Fefferman C., Weighted norm inequalities for maximal functions and singular integrals, Studia Math., {\bf 51} (1974), 241--250, doi: 10.4064/sm-51-3-241-250

\bibitem{17}  Dong H., Kim D. On Lp-estimates for elliptic and parabolic equations with Ap weights, Trans. Am. Math. Soc., {\bf 370}, No.~7 (2018), 5081--5130. doi: 10.1090/tran/7161 

\bibitem{18} Dynkin E.M., Osilenker V.R. Weight estimates for singular integrals and their application, VINITI, {\bf 21} (1983), 42--129. 





\bibitem{38} Fucik S., John O.,  Kufner A.,    Function spaces,  Noordhoff, Leyden,  1977.

\bibitem{GT}  Gilbarg D., Trudinger N.S., Elliptic Partial Differential Equations of Second Order, Berlin-Heidelberg-New York-Tokyo: Springer-Verlag, 1983 


\bibitem{GS}
Guliyev, V.S., Softova, L. G.,
Global regularity in generalized Morrey spaces of solutions to nondivergence elliptic equations with VMO coefficients,   Potential Anal., {\bf  38}, No.~3 (2013),  843--862. 


\bibitem{21}  Kokilashvili V., Meskhi A., Rafeiro H., Samko S., Integral Operators in Non-Standard Function Spaces, Vol. 1: Variable Exponent Lebesgue and Amalgam Spaces, Springer, 2016. 

\bibitem{22}  Kokilashvili V., Meskhi A., Rafeiro H, Samko S., Integral Operators in Non-Standard Function Spaces, Vol. 2: Variable Exponent Holder, Morrey-Campanato, and Grand Spaces, Springer, 2016. 


\bibitem{23}  Maugeri A., Palagachev D.K., Softova L.G., Elliptic and Parabolic Equations with Discontinuous Coefficients, WILEY-VCH Verlag Berlin GmbH, 365 p. (2000). 



\bibitem{24}  Palagachev D.K., Softova L.G., Singular integral operators, Morrey spaces and fine regularity of solutions to PDEs, Potential  Anal., {\bf 20} (2004), 237--263, doi: 10.1007/s00013-005-1336-8

\bibitem{25}  Palagachev D.K., Softova L.G., Fine regularity for elliptic systems with discontinuous ingredients, Arch. Math., {\bf 86}, No.~2 (2006), 145--153. 

\bibitem{26}  Palagachev D.K., Softova L.G., Elliptic systems in generalized Morrey spaces, Azerb. J. Math.,  {\bf 11}, No.~2 (2021), 153--162, ISSN 2218-6816.

\bibitem{28} Softova L.G., The Dirichlet problem for elliptic equations with VMO coefficients in generalized Morrey spaces, Oper.Theory: Adv.  Appl., {\bf  229} (2013), 365--380, doi: 10.1007/978-3-0348-0516-2\_21.







\end{thebibliography}
\end{document}